\theoremstyle{plain}
  \newtheorem{theorem}{Theorem}[section]
  \newtheorem{proposition}[theorem]{Proposition}
  \newtheorem{lemma}[theorem]{Lemma}
  \newtheorem{corollary}[theorem]{Corollary}
\theoremstyle{definition}
\theoremstyle{remark}
  \newtheorem{remark}[theorem]{Remark}
\numberwithin{equation}{section}
\def\clap#1{\hbox to 5pt{\hss$#1$\hss}}
\def\umapright#1{\smash{
   \mathop{\longrightarrow}\limits^{#1}}}
\def\rmapdown#1{\Big\downarrow\rlap
   {$\vcenter{\hbox{$\scriptstyle#1$}}$}}
\def\tempbaselines
\def\diagram#1{\null\,\vcenter{\tempbaselines
\mathsurround=0pt
    \ialign{\hfil$##$\hfil&&\quad\hfil$##$\hfil\crcr
      \mathstrut\crcr\noalign{\kern-\baselineskip}
  #1\crcr\mathstrut\crcr\noalign{\kern-\baselineskip}}}\,}
\def\pullback#1&#2&#3&#4&#5&#6&#7&#8&{
\diagram{#1&\umapright{#2}&#3\cr
\rmapdown{#4}&&\rmapdown{#5}\cr
#6&\umapright{#7}&#8\cr}}
\def\calC{{\mathcal C}}
\def\calD{{\mathcal D}}
\def\calE{{\mathcal E}}
\def \End{\mathop{\rm End}\nolimits} 
\def \Ext{\mathop{\rm Ext}\nolimits} 
\def \Fun{\mathop{\rm Fun}\nolimits} 
\def \Hom{\mathop{\rm Hom}\nolimits} 
\def \uHom{\mathop{\underline{\rm Hom}}\nolimits}
\def \Id{\mathop{\rm Id}\nolimits}
\def \op{{\mathop{\rm op}\nolimits}}
\def\Rad{\mathop{\rm Rad}\nolimits}
\def\Serre{{\mathcal S}}
\def \hatH{\mathop{\widehat{\rm H}}\nolimits}
\def\stmod{\mathop{\bf stmod}\nolimits}
\def\clap#1{\hbox to 0pt{\hss$#1$\hss}}
\def\lra{{\longrightarrow}}
\def\ZZ{{\mathbb Z}}
\begin{document}

\title[The Graded Center]{The Graded Center of a Triangulated Category}

\author{Jon F. Carlson}
\email{jfc@math.uga.edu}
\address{Department of Mathematics\\
University of Georgia\\
Athens, GA 30602, USA}

\author{Peter Webb}
\email{webb@math.umn.edu}
\address{School of Mathematics\\
University of Minnesota\\
Minneapolis, MN 55455, USA}

\subjclass[2000]{Primary 16G70; Secondary 18E30, 20C20}

\keywords{Auslander-Reiten triangle, stable module category,
  Serre functor, graded center}

\begin{abstract}
  With applications in mind to the representations and cohomology
  of block algebras, we examine elements of the graded center of
  a triangulated category when the category has a Serre functor.
  These are natural transformations from the identity functor to
  powers of the shift functor that commute with the shift functor
  We show that such natural transformations which have  support in
  a single shift orbit of indecomposable objects are necessarily
  of a kind previously constructed by Linckelmann. Under further
  conditions, when the support is contained in only finitely many
  shift orbits, sums of transformations of this special kind
  account for all possibilities.

  Allowing infinitely many shift orbits in the support, we construct
  elements of the graded center of the stable module category of
  a tame group algebra of a kind that cannot occur with wild block algebras.
  We use functorial methods extensively in the proof, developing some
  of this theory in the context of triangulated categories.
\end{abstract}

\dedicatory{To the memory of a wonderful friend, Laci Kov\'acs}

\maketitle

\section{Introduction}
The graded center of a triangulated category $\calC$ is the set of
natural transformations $\Id_\calC \to \Sigma^n$ that commute with the
shift functor $\Sigma$ up to a sign $(-1)^n$. 
In \cite{Lin}, Linckelmann investigated
the graded center of a block algebra of a finite group. His main result
showed that the graded center of the derived category of a block is,
modulo a nilpotent ideal, noetherian over the cohomology ring of the
block. Along the way, Linckelmann showed that there is a large ideal
of nilpotent elements in the graded center generated by elements
in degree minus one that are supported on only a single $\Sigma$-orbit
of modules. This result was extended by Linckelmann and Stancu to
obtain elements in all degrees each of which is supported on only a single
module that is periodic of period one. 

Unltimately we would like to be able to characterize the nilpotent 
elements in the graded center. In that direction, a natural question to ask is
whether there can be elements of the graded center that are non-trivially
supported on more than one $\Sigma$-orbit? By this we mean, do there
exist elements in the graded center that vanish on all but a finite 
number of $\Sigma$-orbits, but which have a non-zero composition with some
non-isomorphism?  
A main purpose of this paper is to show that 
the answer to that question is generally negative. 

For the most part, we work in a Hom-finite,
Krull-Schmidt, $k$-linear triangulated  category $\calC$
that is Calabi-Yau. For
$F: \calC \to \calC$ an endofunctor, we define the support of a natural
transformation $\alpha: \Id_\calC \to F$ to be the set of isomorphism
classes of objects $U$ with $\alpha_U: U \to F(U)$ not zero. We prove
that the support of such an $\alpha$ is a single object if and only if
$F$ is the Serre functor and for any object $M$, $\alpha_M$ is an almost
vanishing morphism. Thus such natural transformations have the same
form as those contructed by Linckelmann. We show that, under some
reasonable assumptions on the Auslander-Reiten quiver, the support of
an element of the graded center, which is supported non-trivially
on more than one shift orbit in a
component of the Auslander-Reiten quiver, has the entire component in
its support. Moreover, if $\calC$ is the stable category of a group
algebra of a $p$-group of wild representation type, 
then such a element $\psi$ of the graded center has the property that
there exists a map $\gamma:U \to M$ of indecomposable modules 
such that $\psi_M \gamma \neq 0$ and $\gamma$
is not a composition of a finite number of irreducible morphisms. 
We show, by the example
of a finite group with dihedral Sylow 2-subgroup, that this
requirement does not hold if the group has tame representation type. 

Throughout the paper we assume that $k$ is an algebraically closed 
field and that $\calC$ is a Hom-finite, Krull-Schmidt, $k$-linear 
triangulated  category with shift $\Sigma$. For background on 
Auslander-Reiten theory we refer to standard texts such as \cite{ASS}.

Both authors are grateful for support from the Simons Foundation. The
first author also thanks NSA for support during part of the time
when this paper was written. 

\section{Linear functors on a triangulated category}

We present some preliminaries on the category of linear functors
defined on a triangulated category.  Let $\Fun^\op\calC$ denote
the category of contravariant
$k$-linear functors from $\calC$ to $k$-vector spaces.  The first
four results of this section are well known.
While they are usually stated for
functors on module categories,  they hold
for $k$-linear functors on $k$-linear categories (and even for
additive functors on additive categories). In particular they hold
for $\Fun^\op\calC$, ignoring the triangulated structure of $\calC$.
There are proofs of these results in \cite{ASS} stated in terms of
functors on the module category of a ring, but the arguments there
work for functors on an additive category without change. Our 
purpose is to point out that these results hold in the generality 
we consider here.

Recall our assumption that $\calC$ is a Hom-finite,
Krull-Schmidt, $k$-linear triangulated  category with shift $\Sigma$,

\begin{proposition}
\label{representable-functors}
The category $\Fun^\op\calC$ is an abelian category. 
Moreover, for each indecomposable object $M$ the representable
functor $\Hom_\calC(-,M)$ is indecomposable and projective,
with endomorphism ring isomorphic to $\End_\calC(M)$.
\end{proposition}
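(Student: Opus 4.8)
The plan is to deduce all three assertions from the Yoneda lemma together with the fact that, since the target category of $k$-vector spaces is abelian, kernels and cokernels in $\Fun^\op\calC$ are computed objectwise. First I would check that $\Fun^\op\calC$ is abelian: given a natural transformation $\eta\colon F\to G$, one sets $(\ker\eta)(X)=\ker(\eta_X)$ and $(\Coker\eta)(X)=\Coker(\eta_X)$, where these are the usual kernel and cokernel of $k$-linear maps; a short verification shows these assignments are again $k$-linear functors, with the obvious transition maps, and that $\eta$ factors as an epimorphism followed by a monomorphism in the expected way. The axioms of an abelian category then follow because each of them is an identity or an isomorphism statement that may be tested after evaluating at every object $X$, where it holds in the abelian category of $k$-vector spaces. (Here one tacitly assumes $\calC$ is skeletally small, so that the natural transformations between two functors form a set; I would flag this once and move on.)

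The main tool for the remaining statements is the consequence of the previous paragraph that, for each object $M$ of $\calC$, the evaluation functor on $\Fun^\op\calC$ sending $F$ to $F(M)$ is exact. Next I would invoke the Yoneda lemma: for every $F\in\Fun^\op\calC$ there is an isomorphism $\Hom_{\Fun^\op\calC}(\Hom_\calC(-,M),F)\cong F(M)$, natural in $F$, given by $\Phi\mapsto\Phi_M(1_M)$. Thus $\Hom_\calC(-,M)$ corepresents the exact functor $F\mapsto F(M)$, so $\Hom_{\Fun^\op\calC}(\Hom_\calC(-,M),-)$ is exact, which is precisely the statement that $\Hom_\calC(-,M)$ is projective in $\Fun^\op\calC$. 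Taking $F=\Hom_\calC(-,M)$ in the same isomorphism, and checking that under $\Phi\mapsto\Phi_M(1_M)$ a composite $\Psi\circ\Phi$ of natural endomorphisms goes to the composite $\Psi_M(1_M)\circ\Phi_M(1_M)$ of morphisms in $\calC$ (this uses naturality of $\Psi$, and is where one must take care that the identification preserves multiplication rather than reversing it), one obtains a $k$-algebra isomorphism $\End_{\Fun^\op\calC}(\Hom_\calC(-,M))\cong\End_\calC(M)$.

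Finally, for indecomposability I would use that $\calC$ is Hom-finite and Krull--Schmidt, so that when $M$ is indecomposable the ring $\End_\calC(M)$ is a finite-dimensional local $k$-algebra, and in particular contains no idempotents other than $0$ and $1$. Since $\End_{\Fun^\op\calC}(\Hom_\calC(-,M))\cong\End_\calC(M)$ is then local, the representable functor $\Hom_\calC(-,M)$ has no nontrivial idempotent endomorphisms and hence admits no nontrivial direct sum decomposition, i.e.\ it is indecomposable. I do not anticipate a genuine obstacle: the only points that merit care are the objectwise verification of the abelian-category axioms and the check that the Yoneda identification of endomorphism rings is multiplicative; everything else is formal, which is why the statement is classical and is recorded here mainly to confirm it at the present level of generality.
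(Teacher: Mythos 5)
Your proof is correct and follows essentially the same route as the paper (which simply cites Assem–Simson–Skowronski and sketches the same two ingredients: objectwise kernels/cokernels for abelianness, and the linear Yoneda lemma for projectivity, the endomorphism-ring computation, and hence indecomposability via the local ring $\End_\calC(M)$). You have merely filled in the details of the citation, including the worthwhile check that the Yoneda identification is a ring isomorphism and not an anti-isomorphism.
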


\begin{proof}
See IV.6.2(a), IV.6.4(a) and A.2.9 of \cite{ASS}. It is standard 
that $\Fun^\op\calC$ is an abelian category in which  kernels, 
cokernels and exactness are determined by evaluation at  the objects 
of $\calC$. The statements about representable functors  are a 
consequence of the linear form of Yoneda's Lemma, which in our 
usage says that any morphism from $\Hom_\calC(-,M)$ to 
$\Hom_\calC(-,N)$ is induced from a morphism from $M$ to $N$. 
\end{proof} 

The simple functors $s^M\in \Fun^\op\calC$ are defined
in \cite[IV.6.7]{ASS}. For each indecomposable object $M$
of $\calC$ we have
$$
s^M=\Hom_\calC(-,M)/\Rad_\calC(-,M)
$$
where $\Rad_\calC(-,M)$ is the radical of $\Hom_\calC(-,M)$,
the subfunctor whose value at an object $X$ is the set of non-isomorphisms
from $X$ to $M$. These functors
have the description
$$
s^M(N)=
\begin{cases}
k&\hbox{if }M\cong N,\cr
0&\hbox{otherwise.}\cr
\end{cases}
$$
The next result is also well known in the context of functors
on module categories, and the proof given in the reference 
carries through verbatim.

\begin{proposition}
\begin{enumerate}
\item The simple objects in $\Fun^\op\calC$ are all of the form 
$s^M$ for some indecomposable object $M$ in $\calC$. The relation
$M \leftrightarrow s^M$ give a one-to-one correspondence 
between isomorphism types of indecomposable
  objects in $\calC$ and isomorphism
  types of simple objects in $\Fun^\op\calC$. 
\item The quotient map $\Hom_\calC(-,M)\to s^M$ is a projective
  cover, having kernel $\Rad_\calC(-,M)$, which is the unique
  maximal subfunctor of $\Hom_\calC(-,M)$. 
\end{enumerate}
\end{proposition}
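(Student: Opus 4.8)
The crux is part~(2): once $\Rad_\calC(-,M)$ is identified as the unique maximal subfunctor of the representable functor $P_M:=\Hom_\calC(-,M)$, both the simplicity of $s^M$ and the projective cover statement are immediate, and part~(1) then follows formally. The tool throughout is the linear Yoneda Lemma already invoked in Proposition~\ref{representable-functors}: for $F\in\Fun^\op\calC$, morphisms $P_M\to F$ correspond to elements of $F(M)$, and the subfunctor of $F$ generated by $x\in F(M)$ is the image of the corresponding morphism; in particular the only subfunctor $G\subseteq P_M$ with $\Id_M\in G(M)$ is $P_M$ itself, since $\Id_M$ generates $P_M$.

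To prove~(2) I would first note that $\Rad_\calC(-,M)$ is a subfunctor of $P_M$ (a composite with a radical morphism is again radical) and that it is proper, because $\Rad_\calC(M,M)=\Rad\End_\calC(M)$ is a proper ideal of $\End_\calC(M)$; here I use that $\calC$ is Hom-finite and Krull--Schmidt, so $\End_\calC(M)$ is a local finite-dimensional $k$-algebra, which together with $k$ algebraically closed also yields $\End_\calC(M)/\Rad\End_\calC(M)\cong k$ and hence the displayed description of $s^M$ on objects. Next I would show that every proper subfunctor $G\subsetneq P_M$ is contained in $\Rad_\calC(-,M)$. Since $G$ is additive and $\calC$ is Krull--Schmidt it suffices to check $G(X)\subseteq\Rad_\calC(X,M)$ for indecomposable $X$: if some $f\in G(X)$ failed to be a radical morphism then, with $X$ and $M$ indecomposable, $f$ would be an isomorphism with $X\cong M$, and applying $P_M(f^{-1})$ would give $\Id_M=f\circ f^{-1}\in G(M)$, forcing $G=P_M$, a contradiction. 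Thus $\Rad_\calC(-,M)$ contains every proper subfunctor while being proper itself, so it is the unique maximal subfunctor. Consequently $s^M=P_M/\Rad_\calC(-,M)$ is simple; and since $P_M$ is projective (Proposition~\ref{representable-functors}) and any subfunctor $H$ with $H+\Rad_\calC(-,M)=P_M$ must equal $P_M$ (otherwise $H\subseteq\Rad_\calC(-,M)$), the kernel is superfluous and $P_M\to s^M$ is a projective cover.

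For~(1), the $s^M$ are pairwise non-isomorphic because $s^M(N)\neq 0$ precisely when $N\cong M$, so it remains to show every simple $S\in\Fun^\op\calC$ is of this form. Choosing an object on which $S$ does not vanish and, by additivity, an indecomposable $M$ with $S(M)\neq 0$, a nonzero element of $S(M)$ gives via Yoneda a nonzero morphism $\eta\colon P_M\to S$, which is epic because $S$ is simple; then $\ker\eta$ is a maximal subfunctor of $P_M$, hence equals $\Rad_\calC(-,M)$ by~(2), so $S\cong s^M$. Combined with the non-isomorphism of distinct $s^M$, this gives the asserted bijection.

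The only genuinely non-formal input is the structural fact that $\End_\calC(M)$ is local with residue field $k$; the rest is bookkeeping with Yoneda and with the radical of a Krull--Schmidt category. I expect the one point requiring mild care to be the reduction of the inclusion $G\subseteq\Rad_\calC(-,M)$ to indecomposable objects, together with checking that $\Rad_\calC(-,M)$ really is stable under the maps $P_M(\phi)$; neither should present a real obstacle, consistent with the remark that the module-category argument of \cite{ASS} transfers verbatim.
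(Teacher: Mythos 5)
Your proof is correct. The paper itself gives no argument here --- it simply cites \cite[IV.6.8]{ASS} and notes that the module-category proof carries over verbatim --- and what you have written is exactly that standard argument (unique maximal subfunctor $\Rad_\calC(-,M)$ via Yoneda and locality of $\End_\calC(M)$, then the projective cover and the classification of simples), so there is nothing to compare beyond noting that you have supplied the details the paper outsources.
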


\begin{proof}
See \cite[IV.6.8]{ASS}.
\end{proof}

We say that a functor $F$ is finitely generated if there is an
epimorphism $\Hom_\calC(-,M)\to F$ for some object $M$. We say
that $F$ is finitely presented if there is an exact sequence of
functors $\Hom_\calC(-,M_1)\to \Hom_\calC(-,M_0)\to F\to 0$.

The next result is, again, usually only stated for functors
on module categories, but it is also true for functors on
additive or k-linear categories.

\begin{proposition}
\label{fg-projectives}
The representable functors $\Hom_\calC(-,M)$, where $M$ is
indecomposable, are a complete list of the indecomposable
finitely generated projective functors in $\Fun^\op\calC$.
\end{proposition}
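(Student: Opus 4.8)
The plan is to check the two halves of the ``complete list'' assertion in turn, the substantive half being that every indecomposable finitely generated projective functor is representable. The first half is immediate from what has already been set up: if $M$ is indecomposable then $\Hom_\calC(-,M)$ is indecomposable and projective by Proposition~\ref{representable-functors}, and it is finitely generated because the identity map is an epimorphism onto it. So these functors do occur on the list, and everything comes down to showing there is nothing else on it.

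So suppose $P\in\Fun^\op\calC$ is indecomposable, finitely generated, and projective. First I would use finite generation to fix an epimorphism $\pi\colon\Hom_\calC(-,N)\to P$ with $N$ an object of $\calC$; since $P$ is projective this splits, and since $\Fun^\op\calC$ is abelian (Proposition~\ref{representable-functors}) a split epimorphism exhibits $P$ as a direct summand of $\Hom_\calC(-,N)$. Next I would decompose $N$: as $\calC$ is Krull--Schmidt, $N\cong N_1\oplus\cdots\oplus N_r$ with each $N_i$ indecomposable, and additivity of $\Hom$ gives $\Hom_\calC(-,N)\cong\bigoplus_{i=1}^r\Hom_\calC(-,N_i)$. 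By Proposition~\ref{representable-functors} the endomorphism ring of $\Hom_\calC(-,N_i)$ is $\End_\calC(N_i)$, which is local since $\calC$ is Hom-finite and Krull--Schmidt.

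To finish I would invoke the Krull--Schmidt theorem in the form that holds in an arbitrary additive category: a finite direct sum of objects with local endomorphism rings complements direct summands, so any direct summand of it is isomorphic to the direct sum of some subfamily (see, e.g., \cite{ASS}). Applied to the summand $P$ of $\bigoplus_{i=1}^r\Hom_\calC(-,N_i)$, and using that $P$ is indecomposable and nonzero, this forces $P\cong\Hom_\calC(-,N_i)$ for a single index $i$; taking $M=N_i$, which is indecomposable, completes the proof.

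I do not anticipate a real obstacle, but there is one point to watch: $\Fun^\op\calC$ itself need not be a Krull--Schmidt category, since its objects can be infinite-dimensional, so the uniqueness input must be applied only to the concrete object $\bigoplus_{i=1}^r\Hom_\calC(-,N_i)$ --- a finite direct sum of objects with local endomorphism rings --- where the complements-direct-summands property is a formal consequence of the exchange lemma and requires no finiteness of $\Fun^\op\calC$.
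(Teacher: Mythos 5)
Your proof is correct and follows the same route the paper delegates to the cited reference \cite[IV.6.5]{ASS}: use projectivity to split $P$ off a representable functor $\Hom_\calC(-,N)$, decompose $N$ into indecomposables, and invoke Azumaya/Krull--Schmidt uniqueness for a finite direct sum of objects with local endomorphism rings. You are also right to flag that $\Fun^\op\calC$ is not itself Krull--Schmidt and that the exchange argument must be applied to the concrete finite sum $\bigoplus_i\Hom_\calC(-,N_i)$ rather than to the ambient functor category; that is exactly the point one has to be careful about.
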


\begin{proof}
See \cite[IV.6.5]{ASS}
\end{proof}

We say that $s^M$ is a \textit{composition factor} of a
functor $F$ if there are subfunctors $F_0\subset F_1$ of $F$
so that $F_1/F_0\cong s^M$.

\begin{corollary}
\label{composition-factor-criterion}
Let $F$ be a functor in $\Fun^\op\calC$. Then $F$ has $s^M$ as
a composition factor if and only if $F(M)\ne 0$. Furthermore,
$F$ has finite composition length if and only if   $F$ is non-zero
on only finitely many isomorphism classes of indecomposable
objects of $\calC$, where its value is finite dimensional.
\end{corollary}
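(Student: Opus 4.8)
The plan is to deduce both parts from properties of evaluation that are already in hand: evaluation at an object of $\calC$ is an exact and additive functor on $\Fun^\op\calC$ by Proposition~\ref{representable-functors}, and for $M$ indecomposable the ring $\End_\calC(M)$ is a finite-dimensional local $k$-algebra with Jacobson radical $\Rad_\calC(M,M)$ (by the Hom-finite and Krull--Schmidt hypotheses). After that everything is bookkeeping with finite-dimensional vector spaces.

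For the first statement, the forward implication is immediate: if $F_0\subset F_1\subseteq F$ with $F_1/F_0\cong s^M$, then evaluating at $M$ gives $F_1(M)/F_0(M)\cong s^M(M)=k$, so $0\neq F_1(M)\subseteq F(M)$. Conversely, given $0\neq x\in F(M)$, Yoneda's Lemma (Proposition~\ref{representable-functors}) yields $\varphi\colon\Hom_\calC(-,M)\to F$ with $\varphi_M(\Id_M)=x$, whose image $G\subseteq F$ is a nonzero quotient $\Hom_\calC(-,M)/K$ of the representable functor. Since $K(M)$ is a proper right ideal of the local ring $\End_\calC(M)$ it is contained in $\Rad_\calC(M,M)$; as $\Rad_\calC(-,M)$ is the \emph{unique} maximal subfunctor of $\Hom_\calC(-,M)$, the subfunctor $K+\Rad_\calC(-,M)$ is then proper and contains that maximal subfunctor, so equals it, giving $K\subseteq\Rad_\calC(-,M)$. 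Hence $G$, and therefore $F$, has $s^M=\Hom_\calC(-,M)/\Rad_\calC(-,M)$ as a quotient, so $s^M$ is a composition factor of $F$.

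For the second statement, suppose first $F$ has a composition series $0=F_0\subset\cdots\subset F_n=F$ with $F_j/F_{j-1}\cong s^{M_j}$. Evaluating at an indecomposable $N$ filters $F(N)$ by the subquotients $s^{M_j}(N)$, so $\dim_k F(N)$ is the number of $j$ with $M_j\cong N$: this is at most $n$, is zero unless $N$ is isomorphic to one of $M_1,\dots,M_n$, and is finite. For the converse, let $M_1,\dots,M_r$ represent the isomorphism classes of indecomposables on which $F$ is nonzero, and set $d_i=\dim_k F(M_i)<\infty$, $D=\sum_i d_i$. I would show that every strictly increasing chain $0=G_0\subsetneq\cdots\subsetneq G_\ell=F$ of subfunctors has $\ell\le D$: subfunctors of the additive functor $F$ are additive, so each $G_j$ vanishes away from the $M_i$, and at each $M_i$ the chain $G_0(M_i)\subseteq\cdots\subseteq G_\ell(M_i)=F(M_i)$ lives in a $d_i$-dimensional space, whence $\sum_{j=1}^\ell\dim_k\bigl(G_j(M_i)/G_{j-1}(M_i)\bigr)=d_i$; summing over $i$ gives $\sum_j\sum_i\dim_k\bigl(G_j(M_i)/G_{j-1}(M_i)\bigr)=D$, and each inner sum is at least $1$ because $G_{j-1}\subsetneq G_j$ forces these subfunctors to differ at some object, hence at some indecomposable object, necessarily one of the $M_i$. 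Thus strictly increasing chains of subfunctors of $F$ have length bounded by $D$, so $F$ is noetherian and artinian in $\Fun^\op\calC$, hence of finite composition length.

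The steps that are pure routine are the exactness and additivity of evaluation and the telescoping count; the one place needing an idea is the converse half of the second statement, since a priori $F$ need not be finitely generated and $\Fun^\op\calC$ is a very large abelian category. The point that unlocks it is to test a chain of subfunctors against each indecomposable object of $\calC$ separately and count dimensions inside the finitely many finite-dimensional spaces $F(M_i)$.
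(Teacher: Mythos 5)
Your proof is correct and follows the same strategy as the paper's: Yoneda's lemma plus the uniqueness of the maximal subfunctor $\Rad_\calC(-,M)$ for the first equivalence, and a dimension count at indecomposables for the second. The only real difference is in the amount of detail: the paper dispatches the ``finite composition length'' equivalence in one sentence, citing only that each $s^M$ is nonzero at a single indecomposable with value of dimension $1$, and leaves the reader to supply the converse. You spell that converse out carefully (bounding the length of any strictly increasing chain of subfunctors by $\sum_i \dim_k F(M_i)$, via additivity of subfunctors and the fact that a proper containment must be visible at some indecomposable), which is exactly the argument being implicitly invoked. Likewise your justification that the kernel $K$ lands in $\Rad_\calC(-,M)$ — going through the local ring $\End_\calC(M)$ — is a fleshed-out version of the paper's appeal to the uniqueness of the maximal subfunctor. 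So: same route, more explicit.
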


\begin{proof}
  If $F$ has $s^M$ as a composition factor then, since $s^M(M)\ne 0$,
  we must have $F(M)\ne 0$. Conversely, if $F(M)\ne 0$ then
  by Yoneda's Lemma there is a non-zero morphism $\Hom_\calC(-,M)\to F$
  showing that the unique simple quotient $s^M$ of $\Hom_\calC(-,M)$
  appears as a composition factor of $F$. The statement about
  finite composition length of $F$ follows from the fact that
  each simple functor is non-zero on a single isomorphism class
  of indecomposable objects, where its value has dimension 1.
\end{proof}

We turn now to a result for triangulated categories which is not
the same as for module categories.  It is well known that finitely
presented functors on a module category have projective dimension
at most 2, because $\Hom$ is left exact on 
such a category \cite[Prop. 4.2]{AR1}. The situation 
for functors on a triangulated category is quite different.

\begin{proposition}
\label{finitistic-dimension}
The only finitely presented functors of finite projective dimension
in $\Fun^\op\calC$ are the representable functors. Equivalently, 
the only monomorphisms between representable functors are split.
\end{proposition}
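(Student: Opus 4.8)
The plan is to exploit the triangulated structure, which is exactly what distinguishes this statement from the module-category case. The key observation is that any morphism of representable functors $\Hom_\calC(-,M_1)\to\Hom_\calC(-,M_0)$ is, by Yoneda (Proposition \ref{representable-functors}), induced by some morphism $f\colon M_1\to M_0$ in $\calC$. Embed $f$ in a distinguished triangle
$$
M_1 \umapright{f} M_0 \lra C \lra \Sigma M_1.
$$
Applying the cohomological functor $\Hom_\calC(X,-)$ to this triangle and also $\Hom_\calC(-,X)$ is not quite what we want; instead, the right move is to apply $\Hom_\calC(-,?)$ to the triangle in the \emph{second} variable and use that $\Hom_\calC(-,-)$ is a cohomological functor in each slot. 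Concretely, the triangle gives, for each object $X$, a long exact sequence
$$
\cdots \lra \Hom_\calC(X,\Sigma^{-1}C) \lra \Hom_\calC(X,M_1) \umapright{f_*} \Hom_\calC(X,M_0) \lra \Hom_\calC(X,C) \lra \cdots,
$$
so the functor map $f_*\colon\Hom_\calC(-,M_1)\to\Hom_\calC(-,M_0)$ has kernel equal to the image of $\Hom_\calC(-,\Sigma^{-1}C)\to\Hom_\calC(-,M_1)$.

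Now suppose $f_*$ is a monomorphism of functors. Then the connecting map $\Hom_\calC(-,\Sigma^{-1}C)\to\Hom_\calC(-,M_1)$ is zero, i.e. the composite $\Sigma^{-1}C\to M_1$ (the rotation of $C\to\Sigma M_1$) induces the zero natural transformation. By Yoneda again, a morphism inducing the zero natural transformation between representables must itself be zero. Hence the map $\Sigma^{-1}C\to M_1$ is zero, which by the rotated triangle
$$
\Sigma^{-1}C \lra M_1 \umapright{f} M_0 \lra C
$$
and the standard fact that a triangle with a zero map splits (the object $M_1$ is then a direct summand, via a section, and $M_0\cong M_1\oplus C$), shows that $f$ is a split monomorphism. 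Therefore $f_*$ is a split monomorphism of functors as well, proving the "Equivalently" clause.

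For the first statement: if $F$ is finitely presented with a presentation $\Hom_\calC(-,M_1)\umapright{g}\Hom_\calC(-,M_0)\to F\to 0$ and $F$ has finite projective dimension, then the image of $g$ (a subfunctor of a projective) must itself have finite projective dimension, and I would argue by descending induction on projective dimension that $g$ is forced to be split: a minimal projective resolution of $\ker g$ would, by the "no nonsplit monos" claim just established applied repeatedly, have to terminate only if it terminates at stage zero. More cleanly: $\ker g$ is a finitely generated subfunctor of a projective, hence (by the representable case, since finitely generated projectives are sums of representables by Proposition \ref{fg-projectives}) receives a map from a representable whose kernel is again such a subfunctor; running this and using that each such monomorphism into a representable is split forces $\ker g = 0$ unless the resolution is infinite, so finite projective dimension forces $\ker g = 0$, whence $F\cong\Im g$ is a direct summand of $\Hom_\calC(-,M_0)$, hence representable by Krull–Schmidt.

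The main obstacle I anticipate is the bookkeeping in the last paragraph: one must be careful that "finitely presented of finite projective dimension" really does force the syzygy to be a finitely generated projective and not merely projective, and that the splitting propagates correctly through the resolution. The cohomological-long-exact-sequence argument for the representable case, by contrast, should be essentially immediate once one writes down the correct triangle and invokes Yoneda in the form that a morphism of representables is zero iff the inducing map is zero.
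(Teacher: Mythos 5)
Your first half --- showing that a monomorphism $\Hom_\calC(-,f)$ between representable functors forces the connecting morphism $\Sigma^{-1}C\to M_1$ to vanish (by Yoneda), whence the rotated triangle splits and $f$ is a split monomorphism --- is correct, and it is exactly the triangulated input the paper uses; the paper compresses it to the statement that all monomorphisms in a triangulated category are split. The logical organization differs slightly: you establish the splitting statement first and deduce the claim about finitely presented functors from it, while the paper proves the functor statement first and derives the splitting statement as a corollary. Both orders work.

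The soft spot is the one you flagged yourself: finite generation of the syzygies. Your ``more cleanly'' paragraph asserts that $\ker g$ is a finitely generated subfunctor of a projective but gives no reason, and without this the descending induction cannot start: a projective syzygy that is not known to be finitely generated need not be representable, so the splitting lemma for monomorphisms between \emph{representables} does not apply to its inclusion into the next projective. The fix is the same long exact sequence you already used: completing $f\colon M_1\to M_0$ to a triangle and rotating repeatedly gives a doubly infinite exact sequence of representable functors
$$
\cdots\to\Hom_\calC(-,\Sigma^{-1}M_0)\to\Hom_\calC(-,M_2)\to\Hom_\calC(-,M_1)\to\Hom_\calC(-,M_0)\to F\to 0,
$$
a projective resolution of $F$ in which every kernel is the image of the preceding representable term, hence finitely generated; this is precisely how the paper arranges matters before invoking the standard fact that finite projective dimension forces some kernel in the resolution to be projective. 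With that in place your induction closes. Finally, your last sentence contains a slip: $F$ is the cokernel of $g$, not its image, and one cannot conclude $\ker g=0$ in general (take $g=0$ and $F=\Hom_\calC(-,M_0)$); the correct output of the descending induction is that $F$ itself is projective and finitely generated, hence a direct summand of $\Hom_\calC(-,M_0)$ and therefore representable by Proposition~\ref{fg-projectives} and Krull--Schmidt.
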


\begin{proof}
  Given a presentation $\Hom_\calC(-,M_1)\to \Hom_\calC(-,M_0)\to F\to 0$
  of a functor $F$ the morphism between the representable functors
  comes from a morphism $\alpha:M_1\to M_0$ in $\calC$, by Yoneda's
  Lemma. Complete this morphism to a triangle and rotate it, to
  get a triangle
$$
M_2\xrightarrow{\beta}M_1\xrightarrow{\alpha}M_0
\xrightarrow{\gamma}\Sigma M_2.
$$
We get a long exact sequence of representable functors which
has $F$ as one of its (co)kernels, giving a long exact sequence
$$
\begin{matrix}
&&\cdots&\lra &\Hom_\calC(-,\Sigma^{-1}M_1)&\lra 
 &\Hom_\calC(-,\Sigma^{-1}M_0)\cr
&\lra &\Hom_\calC(-,M_2)&\lra &\Hom_\calC(-,M_1)&\lra &\Hom_\calC(-,M_0)\cr
&\lra& F&\lra &0,&&&\cr
\end{matrix}
$$
which is a projective resolution of $F$.  By a standard result
in homological algebra,  $F$ has finite projective dimension if
and only if at some stage the kernel in this resolution is
projective. Note that every kernel is  finitely generated,
because it is the image of the next term in the resolution. Thus,
if $F$ has finite projective dimension, then one of the morphisms
$\Hom_\calC(-,U)\to \Hom_\calC(-,V)$ in the sequence factors
as a surjection onto a projective functor followed by an injection from
the projective functor as follows:
$$
\Hom_\calC(-,U)\to \Hom_\calC(-,X)\to \Hom_\calC(-,V)
$$
corresponding to morphisms
$U\xrightarrow{\phi} X\xrightarrow{\theta} V$ in $\calC$. 
Here we are using the fact that projective functors are representable,
by Proposition~\ref{fg-projectives}.
Because the first of these maps of functors is surjective,
the identity $1_X$ is an image of a map $X\to U$ after
composition with $\phi$, so that $\phi$ is a split epimorpism. The
injectivity of the second map of functors is exactly the
definition that $\theta$ is a monomorphism. In a triangulated
category all monomorphisms are split, so that $\theta$ is
a split monomorphism. It follows from this that there are decompositions
$U\cong U_1\oplus X$ and $V\cong X\oplus V_1$ so that the
morphism $U\to V$ is the identity on $X$ and 0 on $U_1$.
Transferring back to the original triangle we see that it
is the sum of two triangles, one of the form
$\Sigma^nX\xrightarrow{1}\Sigma^nX\to 0 \to \Sigma^{n+1}X$
for some $n$,  and the other with zero as one of its three
morphisms. The first of these produces contractible summands
of the resolution of $F$, and the second produces a
resolution which is split everywhere, showing that $F$ is
projective because the final map $\Hom_\calC(-,M_0)\to F$
must split.

The equivalence with the statement that monomorphisms between 
representable functors are split is immediate. If there is a 
non-split such monomorphism, then its cokernel has projective 
dimension 1 and is not projective. On the other hand, any 
non-projective functor of finite projective dimension gives 
rise to a functor of projective dimension 1 (appearing at the 
end of the finite projective resolution), and this is presented 
by a non-split monomorphism of projectives.
\end{proof}

We characterize the existence of Auslander-Reiten triangles in 
terms of finite presentability of the corresponding simple functors. 
The result is familiar for functors on module categories, but less 
so for functors on triangulated categories.

\begin{proposition}
\label{simple-functors-finitely-presented}
Let $\calC$ be a Hom-finite, Krull-Schmidt triangulated category, and 
let $M$ be an indecomposable object in $\calC$. The simple functor 
$s^M$ is finitely presented if and only if there is an Auslander-Reiten 
triangle $U\to V\to M\to \Sigma U$. When there exists such an 
Auslander-Reiten triangle the map of representable functors 
$\Hom_\calC(-,M)\to \Hom_\calC(-,\Sigma U)$ has $s^M$ as its image.
\end{proposition}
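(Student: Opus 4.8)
The plan is to recognize ``$s^M$ finitely presented'' as the functor-theoretic counterpart of ``$M$ is the target of a right almost split morphism'', and then to move between right almost split morphisms and Auslander--Reiten triangles by completing a morphism to a triangle. First I would record the translation. By the description of the projective cover of $s^M$ recalled above, $\Hom_\calC(-,M)\to s^M$ has kernel $\Rad_\calC(-,M)$, so $s^M$ is finitely presented exactly when $\Rad_\calC(-,M)$ is finitely generated, i.e.\ when there is an epimorphism $\Hom_\calC(-,V)\twoheadrightarrow\Rad_\calC(-,M)$ for some object $V$. Composing with the inclusion $\Rad_\calC(-,M)\hookrightarrow\Hom_\calC(-,M)$ and applying Yoneda's Lemma, such an epimorphism is the same datum as a morphism $g\colon V\to M$ whose induced map $g_*\colon\Hom_\calC(-,V)\to\Hom_\calC(-,M)$ has image $\Rad_\calC(-,M)$. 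Since $M$ is indecomposable and $\calC$ is Krull--Schmidt, $\Rad_\calC(X,M)$ is precisely the set of non-split epimorphisms $X\to M$ (because $\End_\calC(M)$ is local), and from this one checks at once that $\Im g_*=\Rad_\calC(-,M)$ holds if and only if $g$ is right almost split.

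Granting this translation, the implication from triangles to presentations, and the final assertion, are formal. If $U\xrightarrow{f}V\xrightarrow{g}M\xrightarrow{h}\Sigma U$ is an Auslander--Reiten triangle then $g$ is right almost split, so $\Im g_*=\Rad_\calC(-,M)$ and hence $\Coker g_*=\Hom_\calC(-,M)/\Rad_\calC(-,M)=s^M$; thus $\Hom_\calC(-,V)\xrightarrow{g_*}\Hom_\calC(-,M)\to s^M\to 0$ is a finite presentation of $s^M$. Applying the cohomological functor $\Hom_\calC(-,-)$ to the triangle gives a long exact sequence of representable functors in which the kernel of $h_*\colon\Hom_\calC(-,M)\to\Hom_\calC(-,\Sigma U)$ is $\Im g_*=\Rad_\calC(-,M)$, so that $\Im h_*\cong\Hom_\calC(-,M)/\Rad_\calC(-,M)=s^M$; this is the last statement of the proposition.

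Conversely, a finite presentation of $s^M$ produces, via the translation, a right almost split morphism $g\colon V\to M$, which I may take to be right minimal --- since $\calC$ is Krull--Schmidt we may replace $V$ by a direct summand, without affecting $\Im g_*$. Complete $g$ to a triangle $U\xrightarrow{f}V\xrightarrow{g}M\xrightarrow{h}\Sigma U$. Then $M$ is indecomposable by hypothesis; $h\ne 0$ because $g_*$ fails to be surjective (as $s^M\ne 0$); and $g$ is right almost split (equivalently, $\Im g_*=\Rad_\calC(-,M)$), as arranged. The remaining, genuinely triangulated, point is that $U$ is indecomposable, and this I expect to be the main obstacle. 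The idea would be that $s^M\cong\Im h_*$ is a \emph{simple} subfunctor of the representable functor $\Hom_\calC(-,\Sigma U)$, which constrains how $h$ can interact with a decomposition of $\Sigma U$: if $\Sigma U\cong N\oplus N'$ with $N'\ne 0$ and the $N'$-component of $h$ zero, then the triangle on $g$ decomposes so as to exhibit $\Sigma^{-1}N'$ as a nonzero direct summand of $V$ annihilated by $g$, contradicting right minimality; a general decomposition is to be treated in the same spirit. Alternatively, at this point one may simply invoke the standard fact (see, e.g., Happel's treatment of Auslander--Reiten triangles) that completing a right minimal right almost split morphism into an indecomposable object to a triangle always yields an Auslander--Reiten triangle. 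Aside from the indecomposability of $U$, every step is a routine use of Yoneda's Lemma and of the long exact sequence of a triangle.
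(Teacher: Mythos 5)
Your proof is correct and takes essentially the same route as the paper's: translate finite presentability of $s^M$ via Yoneda into the existence of a right almost split morphism $g\colon V\to M$, complete $g$ to a triangle, verify the AR properties, and invoke Happel for the indecomposability of the third object. The final assertion about $\Im h_* = s^M$ is also handled the same way, via the long exact sequence.

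The one step where you and the paper diverge is the reduction to a right minimal morphism. You replace $V$ by a direct summand using the general Krull--Schmidt fact that in a Hom-finite Krull--Schmidt category every morphism has a right minimal version with the same image functor; this is standard but worth a one-line citation. The paper instead stays functorial: it observes that a projective (hence representable) direct summand of $\ker\bigl(\Hom_\calC(-,V)\to\Hom_\calC(-,M)\bigr)$ would give a monomorphism between representable functors, which by Proposition~\ref{finitistic-dimension} must split, and one peels it off from both $\Hom_\calC(-,V)$ and $\Hom_\calC(-,U)$ simultaneously. The paper's version is tailored to the triangulated setting (it uses the splitting of monomorphisms between representables, a fact proved earlier in the section and specific to triangulated categories), whereas yours is a more generic Krull--Schmidt reduction. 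Both work. Your sketched alternative to Happel's indecomposability argument is only partially developed --- the simplicity of $\Im h_*$ does not by itself rule out a decomposition $\Sigma U = N\oplus N'$ in which both components of $h$ are nonzero --- but since you explicitly fall back on Happel there, this is not a gap.
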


\begin{proof}
If there is such an Auslander-Reiten triangle the long exact sequence
$$
\cdots\to \Hom_\calC(-,U)\to \Hom_\calC(-,V)\to 
\Hom_\calC(-,M)\to \Hom_\calC(-,\Sigma U)\to\cdots
$$
provides the start of a resolution
$$
\Hom_\calC(-,V)\to \Hom_\calC(-,M)\to s^M\to 0
$$
because the lifting property of the Auslander-Reiten triangle coupled 
with the fact that it is not split translates to the statement that 
the cokernel of 
$\Hom_\calC(-,V)\to \Hom_\calC(-,M)$ is $s^M$, 
which is also the image of 
$\Hom_\calC(-,M)$ in $\Hom_\calC(-,\Sigma U)$. 
This shows that $s^M$ is finitely presented. 

Conversely, if $s^M$ is finitely presented by a 3-term exact sequence 
of this form, then the morphism $\Hom_\calC(-,V)\to \Hom_\calC(-,M)$ comes 
from a morphism $V\to M$ in $\calC$ which we may extend to a triangle 
$U\to V\to M\to \Sigma U$. This triangle satisfies the Auslander-Reiten 
lifting property at $M$, and the morphism $M\to \Sigma U$ is not zero 
since $V\to M$ is not a split epimorphism. The triangle gives rise to 
a long exact sequence of representable functors
of the kind at the start of this proof. If the kernel of 
$\Hom_\calC(-,V)\to \Hom_\calC(-,M)$ has a non-zero projective 
direct summand, then such a summand is finitely presented and 
hence representable. By Proposition~\ref{finitistic-dimension}, 
it splits off from $\Hom_\calC(-,V)$, as well as from 
$\Hom_\calC(-,U)$. Thus we can remove such a summand and may 
assume that $\Hom_\calC(-,V)$ is a projective cover of 
$\Rad_\calC(-,M)$. In this case the morphism $V\to M$ is minimal 
right almost split, in the terminology of \cite{ASS}. It is
proven by Happel~\cite[page 36]{Hap} (in the dual case of 
a minimal left almost split morphism) that the third term U 
in the triangle is indecomposable, and hence the triangle is 
an Auslander-Reiten triangle.
\end{proof}

Given a Hom-finite, Krull-Schmidt triangulated category
$\calC$ over $k$, a \textit{Serre functor} on $\calC$ is a
self-equivalence $\Serre:\calC\to\calC$ for which there
are bifunctorial isomorphisms
$$
D\Hom_\calC(X,Y)\cong \Hom_\calC(Y,\Serre(X))\quad
\hbox{for all}\quad X,Y\in\calC.
$$
Here $D(U)=\Hom_k(U,k)$ is the vector space duality.
It was shown in \cite{RvdB} that $\calC$ has a Serre
functor $\Serre$ if and only if $\calC$ has Auslander-Reiten triangles,
and that the Auslander-Reiten triangles have the form
$$
\Sigma^{-1}\Serre(U)\xrightarrow{\alpha} V\xrightarrow{\beta}
U\xrightarrow{\gamma}\Serre(U)
$$
with Auslander-Reiten translate $\tau=\Sigma^{-1}\Serre$.

We now point out that the presence of a Serre functor on
$\calC$ makes $\Fun^\op\calC$ into a self-injective category.
We will use, particularly, the fact that representable
functors for indecomposable objects have simple socles.

\begin{proposition} \label{prop:injective}
  Let $\calC$ be a Hom-finite, Krull-Schmidt triangulated
  category with Serre functor $\Serre$. Then each
  representable functor $\Hom_\calC(-,M)$ is injective
  (as well as projective), with simple socle $s^{\Serre^{-1}(M)}$.
\end{proposition}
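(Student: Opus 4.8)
The plan is to realize every representable functor in $\Fun^\op\calC$, up to natural isomorphism, as a functor of the form $D\Hom_\calC(N,-)$, and then to show directly that all such functors are injective with simple socle $s^N$. The first step is simply to unwind the Serre duality isomorphism $D\Hom_\calC(X,Y)\cong\Hom_\calC(Y,\Serre(X))$: holding $X$ fixed and letting $Y$ vary, bifunctoriality says that $\Hom_\calC(-,\Serre(X))\cong D\Hom_\calC(X,-)$ as objects of $\Fun^\op\calC$ (both are contravariant in $Y$). Since $\Serre$ is a self-equivalence we may substitute $X=\Serre^{-1}(M)$ and obtain $\Hom_\calC(-,M)\cong D\Hom_\calC(\Serre^{-1}(M),-)$, so it suffices to analyse $I_N:=D\Hom_\calC(N,-)$ for an arbitrary object $N$.

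The second step is an ``injective Yoneda lemma'': for every $F\in\Fun^\op\calC$ there is an isomorphism $\Hom_{\Fun^\op\calC}(F,I_N)\cong DF(N)$, natural in $F$. A natural transformation $\eta\colon F\to I_N$ is sent to the functional $x\mapsto\eta_N(x)(1_N)$ on $F(N)$; conversely $\phi\in DF(N)$ gives $\eta$ by $\eta_X(x)(f)=\phi\bigl(F(f)(x)\bigr)$ for $x\in F(X)$ and $f\colon N\to X$, and a short check with the naturality squares shows these assignments are mutually inverse. Since evaluation at $N$ is exact on $\Fun^\op\calC$ (kernels, cokernels and exactness are computed objectwise, by Proposition~\ref{representable-functors}) and vector space duality is exact, the functor $F\mapsto DF(N)$ is exact; hence $I_N$ is injective. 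Combined with Step~1 this gives the injectivity of $\Hom_\calC(-,M)$, while its projectivity (together with indecomposability and endomorphism ring $\End_\calC(M)$ when $M$ is indecomposable) is already contained in Proposition~\ref{representable-functors}.

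For the socle, I would observe that any nonzero map from a simple functor to $I_N$ is a monomorphism, so $\Soc(I_N)$ is semisimple with the multiplicity of $s^X$ equal to $\dim_k\Hom_{\Fun^\op\calC}(s^X,I_N)$; by the injective Yoneda lemma this multiplicity is $\dim_k D\,s^X(N)=\dim_k s^X(N)$, which is $1$ if $X\cong N$ and $0$ otherwise. Therefore $\Soc(I_N)\cong s^N$, and taking $N=\Serre^{-1}(M)$ yields $\Soc\bigl(\Hom_\calC(-,M)\bigr)\cong s^{\Serre^{-1}(M)}$, as claimed.

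I expect the only delicate point to be keeping the variances straight in the injective Yoneda isomorphism — verifying that, for a contravariant $F$, the pairings $F(X)\times\Hom_\calC(N,X)\to k$ assemble into a natural transformation to $I_N$ and back — but this is bookkeeping rather than a genuine obstacle. Alternatively one could quote the module-category versions of these facts from \cite{ASS} and remark, as elsewhere in this section, that their proofs go through for $k$-linear functors on a $k$-linear category; I would nonetheless prefer to give the short direct argument, since the Serre functor enters only through the harmless substitution in Step~1.
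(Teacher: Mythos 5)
Your proof is correct, and it diverges from the paper's in an interesting way on the second half. The injectivity argument is essentially the paper's: both identify $\Hom_\calC(-,M)$ with $D\Hom_\calC(\Serre^{-1}(M),-)$ via Serre duality and deduce injectivity from the projectivity of the covariant representable functor; you merely spell out, via the ``injective Yoneda lemma'' $\Hom_{\Fun^\op\calC}(F,I_N)\cong DF(N)$ and the objectwise exactness of evaluation, why the dual of a projective covariant functor is injective, a step the paper leaves implicit. For the socle, however, the paper takes the Auslander--Reiten triangle $\Sigma^{-1}M\to E\to \Serre^{-1}(M)\to M$ (whose existence it draws from \cite{RvdB}) and invokes Proposition~\ref{simple-functors-finitely-presented} to exhibit $s^{\Serre^{-1}(M)}$ as the image of $\Hom_\calC(-,\Serre^{-1}(M))\to\Hom_\calC(-,M)$, then declares this simple subfunctor to be the socle. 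You instead compute $\Hom_{\Fun^\op\calC}(s^X,I_N)\cong Ds^X(N)$ for every indecomposable $X$, which shows in one stroke that $s^N$ embeds with multiplicity one and that no other simple embeds at all. Your route is more self-contained (no Auslander--Reiten triangles are needed) and it actually closes a small gap in the paper's version: to know that the simple subfunctor produced there is the \emph{entire} socle one needs either your multiplicity count or an appeal to the indecomposability of the injective $\Hom_\calC(-,M)$. What the paper's route buys is a concrete description of the socle as the image of the map induced by the almost vanishing morphism, which is reused in the proof of Proposition~\ref{almost-vanishing-characterization}. Both arguments, like the statement itself, tacitly take $M$ indecomposable.
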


\begin{proof}
For each object $X$ we have
$$
\Hom_\calC(X,M)\cong D\Hom_\calC(M,\Serre(X))\cong
D\Hom_\calC(\Serre^{-1}(M),X).
$$
Because $\Hom_\calC(\Serre^{-1}(M),-)$ is a projective covariant
functor on $\calC$ it follows that
$$
D\Hom_\calC(\Serre^{-1}(M),-)\cong \Hom_\calC(-,M)
$$
is an injective contravariant functor on $\calC$, as
well as being projective. Now if
$$
\Sigma^{-1}M\to E\to \Serre^{-1}(M)\to M
$$
is an Auslander-Reiten triangle, the image of
\[
\Hom_\calC(-, \Serre^{-1}(M))\to \Hom_\calC(-,M)
\]
 is the
simple functor $s^{\Serre^{-1}(M)}$ by 
Proposition~\ref{simple-functors-finitely-presented}.
This is the socle.
\end{proof}

We identify composition factors of functors
in $\Fun^\op\calC$ in the spirit of \cite{AR2}.

\begin{corollary}
\label{composition-factors}
Assume that $\calC$ has a Serre functor $\Serre$. Let
$U$ and $M$ be indecomposable objects of $\calC$. The
following are equivalent:
\begin{enumerate}
\item The functor $s^U$ is a composition factor of $\Hom_\calC(-,M)$.
\item there is a non-zero morphism $U\to M$.
\item there is a non-zero morphism $\Serre^{-1}(M)\to U$.,
\item The functor $s^{\Serre^{-1}(M)}$ is a composition 
factor of $\Hom_\calC(-,U)$.
\end{enumerate}
\end{corollary}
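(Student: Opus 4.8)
The plan is to prove the chain of equivalences (1) $\Leftrightarrow$ (2) and (2) $\Leftrightarrow$ (3) and (3) $\Leftrightarrow$ (4), using the results already assembled; then (1)–(4) follow by transitivity. The equivalence (1) $\Leftrightarrow$ (2) is immediate from Corollary~\ref{composition-factor-criterion}: $s^U$ is a composition factor of $\Hom_\calC(-,M)$ if and only if $\Hom_\calC(U,M)\ne 0$, which is exactly the statement that there is a non-zero morphism $U\to M$. Symmetrically, applying the same corollary to the functor $\Hom_\calC(-,U)$ gives (4) $\Leftrightarrow$ ``$\Hom_\calC(\Serre^{-1}(M),U)\ne 0$'', which is precisely (3). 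So the content is entirely in linking (2) and (3).

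For (2) $\Leftrightarrow$ (3) I would invoke the defining property of the Serre functor. The bifunctorial isomorphism gives
$$
\Hom_\calC(U,M)\cong D\Hom_\calC(M,\Serre(U)),
$$
and, as in the proof of Proposition~\ref{prop:injective}, this can be rewritten by applying $\Serre^{-1}$ to both arguments inside the second Hom, or more directly: replacing $X$ by $\Serre^{-1}(M)$ and $Y$ by $U$ in $D\Hom_\calC(X,Y)\cong\Hom_\calC(Y,\Serre(X))$ yields
$$
\Hom_\calC(\Serre^{-1}(M),U)\cong D\Hom_\calC\bigl(U,\Serre(\Serre^{-1}(M))\bigr)\cong D\Hom_\calC(U,M).
$$
Since $D$ is an exact duality on finite-dimensional vector spaces (and all Hom spaces here are finite-dimensional by the Hom-finiteness hypothesis), $\Hom_\calC(U,M)\ne 0$ if and only if $D\Hom_\calC(U,M)\ne 0$ if and only if $\Hom_\calC(\Serre^{-1}(M),U)\ne 0$. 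This is exactly (2) $\Leftrightarrow$ (3).

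I do not expect a genuine obstacle here; this corollary is a bookkeeping consequence of Proposition~\ref{prop:injective} together with Corollary~\ref{composition-factor-criterion}. The only point requiring a little care is making sure the isomorphism $\Hom_\calC(\Serre^{-1}(M),U)\cong D\Hom_\calC(U,M)$ is a genuine consequence of the stated bifunctorial isomorphism applied to the correct pair of objects (rather than needing an independent argument), and that the whole chain is run with indecomposable $U$ and $M$ so that the simple functors $s^U$ and $s^{\Serre^{-1}(M)}$ are well-defined — but $\Serre$ is a self-equivalence, so $\Serre^{-1}(M)$ is again indecomposable, and there is nothing to check. Alternatively, one could deduce (1) $\Leftrightarrow$ (4) directly by observing that the socle of $\Hom_\calC(-,U)$ is $s^{\Serre^{-1}(U)}$ and dualizing, but the route through (2) and (3) is cleaner and makes the morphism-theoretic meaning explicit.
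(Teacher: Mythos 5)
Your proof is correct and is precisely the argument the paper has in mind when it calls the corollary ``immediate from Corollary~\ref{composition-factor-criterion} and the definition of a Serre functor'': (1)$\Leftrightarrow$(2) and (3)$\Leftrightarrow$(4) come from the composition-factor criterion, and (2)$\Leftrightarrow$(3) is the Serre duality isomorphism applied to the pair $(\Serre^{-1}(M),U)$. You have simply written out the bookkeeping that the paper leaves to the reader.
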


\begin{proof}
  This is immediate from Corollary~\ref{composition-factor-criterion}
  and the definition of a Serre functor.
\end{proof}

Following Linckelmann~\cite{Lin} (who attributes the terminology
to Happel~\cite{Hap}) we say that the third morphism
$\gamma$ in an Auslander-Reiten triangle
$$
X\xrightarrow{\alpha} Y\xrightarrow{\beta} Z\xrightarrow{\gamma} \Sigma X
$$
is \textit{almost vanishing}. Notice that the domain and
codomain of $\gamma$ are both indecomposable in this definition.
An almost vanishing morphism determines the corresponding
Auslander-Reiten triangle by completing it to a triangle and
rotating to put it in the right position. Equally, 
an almost vanishing morphism exists with domain $Z$ 
(or codomain $X$) if and only if there is an 
Auslander-Reiten triangle with $Z$ on the 
right (or $X$ on the left -- 
since these properties are preserved by $\Sigma$).
 
As an example, when $\calC=\stmod(A)$ is the stable module
category of a symmetric algebra, $\gamma$ is almost vanishing
if and only if  it represents an almost split sequence of
$A$-modules as an element of $\Ext_A^1(W,\Omega(U))$.

Almost vanishing morphisms have been used in several places
in the literature. They underlie the construction of natural
transformations in the graded center in \cite{Lin} and \cite{LS}.
They provide a construction of ghost maps showing that Freyd's
generating hypothesis fails in general for the stable module
category $\stmod(kG)$, when $G$ is a finite group  \cite{CCM}.
They were used by Happel \cite{Hap} in constructing
Auslander-Reiten triangles in bounded derived categories
(where they exist). We present several 
characterizations of these morphisms. Most of these are 
well known, but conditions (2) and (3) may be less familiar.

\begin{proposition}
\label{almost-vanishing-characterization}
Let $\calC$ be a Hom-finite, Krull-Schmidt triangulated
category with Serre functor $\Serre$, and let $f:X\to Y$
be a morphism between indecomposable objects in $\calC$.
The following are equivalent:
\begin{enumerate}
\item The map $f$ is almost vanishing.
\item $f$ is non-zero, and for all objects $U$, $f$ factors
  through every non-zero morphism $U\to Y$.
\item $f$ is non-zero, and for all objects $V$, $f$ factors
  through every non-zero morphism $X\to V$.
\item Whenever $g:U\to X$ is
not a split epimorphism in $\calC$ then $fg=0$.
\item Whenever $h:Y\to U$ is
not a split monomorphism then $hg=0$.
\item The map
$\Hom_\calC(-,f):\Hom_\calC(-,X)\to \Hom_\calC(-,Y)$
factors through a simple functor.
\end{enumerate}
Thus morphisms $f$ satisfying any (and hence all) of the
above conditions are determined up to scalar multiple. For
such a morphism,  $Y\cong\Serre(X)$.
\end{proposition}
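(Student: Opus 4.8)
The plan is to transport the whole statement into the functor category $\Fun^\op\calC$, where the pivotal condition is the assertion $(\star)$ that $\Hom_\calC(-,f)\colon\Hom_\calC(-,X)\to\Hom_\calC(-,Y)$ has image the simple functor $s^X$ --- equivalently, since $\Rad_\calC(-,X)$ is the unique maximal subfunctor of $\Hom_\calC(-,X)$, that $\ker\Hom_\calC(-,f)=\Rad_\calC(-,X)$. First I would observe that (6) merely restates $(\star)$: a nonzero map out of $\Hom_\calC(-,X)$ whose image lies in a simple functor has image the unique simple quotient $s^X$, and conversely a map with image $s^X$ factors through a simple functor. For (4), note that the radical being an ideal, every element of $\Rad_\calC(-,X)$ is a non-split epimorphism into $X$, and conversely (since $X$ is indecomposable); so (4) asserts that $\ker\Hom_\calC(-,f)$ contains $\Rad_\calC(-,X)$ but is proper (this is where $f\neq0$ enters), which --- $\Rad_\calC(-,X)$ being the unique maximal subfunctor --- is just $(\star)$. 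Thus $(4)\Leftrightarrow(6)\Leftrightarrow(\star)$.

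The substance is the equivalence $(1)\Leftrightarrow(\star)$. If $f$ is the third morphism of an Auslander--Reiten triangle $U\to V\to X\xrightarrow{f}\Sigma U$, then Proposition~\ref{simple-functors-finitely-presented} says precisely that $\Hom_\calC(-,f)$ has image $s^X$, so $(1)\Rightarrow(\star)$. Conversely, given $(\star)$, completing $f$ to a triangle $\Sigma^{-1}C\to X\xrightarrow{f}Y\to C$ realizes $\Rad_\calC(-,X)=\ker\Hom_\calC(-,f)$ as the image of $\Hom_\calC(-,\Sigma^{-1}C)$, hence as a finitely generated functor, so $s^X=\Hom_\calC(-,X)/\Rad_\calC(-,X)$ is finitely presented. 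By Proposition~\ref{simple-functors-finitely-presented} there is then an Auslander--Reiten triangle, necessarily of the form $\Sigma^{-1}\Serre(X)\to V\to X\xrightarrow{c}\Serre(X)$, with $\Hom_\calC(-,c)$ also of image $s^X$. Since $s^X$ embeds in $\Hom_\calC(-,Y)$, whose socle is the simple functor $s^{\Serre^{-1}(Y)}$ by Proposition~\ref{prop:injective}, we get $s^X\cong s^{\Serre^{-1}(Y)}$, i.e.\ $Y\cong\Serre(X)$ --- the last assertion of the proposition. Choosing an isomorphism $\theta\colon Y\xrightarrow{\sim}\Serre(X)$, both $\Hom_\calC(-,\theta f)$ and $\Hom_\calC(-,c)$ factor as the canonical projection $\Hom_\calC(-,X)\to s^X$ followed by an embedding of $s^X$ as the simple socle of $\Hom_\calC(-,\Serre(X))$, and that projection and that embedding are each unique up to a nonzero scalar, so $\theta f=\lambda c$ for some $\lambda\in k^\times$ by Yoneda's Lemma. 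Hence $f=\lambda\theta^{-1}c$ is the third morphism of an Auslander--Reiten triangle (the one obtained from the triangle above by transporting its right-hand term to $Y$ along $\lambda\theta^{-1}$), so $f$ is almost vanishing; and the same computation shows that any two morphisms $X\to Y$ satisfying these conditions are proportional, which is the uniqueness-up-to-scalar statement.

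It remains to fold in (2), (3) and (5). For a nonzero $h\colon U\to Y$ complete it to a triangle $U\xrightarrow{h}Y\xrightarrow{g}C\to\Sigma U$; by exactness of $\Hom_\calC(X,-)$, $f$ factors through $h$ if and only if $gf=0$, and by the standard fact that a morphism in a distinguished triangle is a split monomorphism exactly when its predecessor vanishes, $g$ is a non-split monomorphism out of $Y$ precisely because $h\neq0$ --- and every non-split monomorphism out of $Y$ arises in this way. This gives $(2)\Leftrightarrow(5)$; completing instead a nonzero $h\colon X\to V$ and using exactness of $\Hom_\calC(-,Y)$ with the dual fact gives $(3)\Leftrightarrow(4)$ in the same way. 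Finally, $\calC^\op$ is again a Hom-finite, Krull--Schmidt, $k$-linear triangulated category with a Serre functor (namely $\Serre^{-1}$), a morphism is almost vanishing in $\calC$ if and only if it is almost vanishing in $\calC^\op$, and condition (5) for $f\colon X\to Y$ in $\calC$ is condition (4) for $f\colon Y\to X$ in $\calC^\op$; so the equivalence $(4)\Leftrightarrow(1)$, applied to $\calC^\op$, yields $(5)\Leftrightarrow(1)$. (Throughout, conditions (4)--(6) are read as including the hypothesis $f\neq0$, which the zero morphism satisfies vacuously; and the ``$hg=0$'' in (5) is a misprint for ``$hf=0$''.)

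I expect the main obstacle to be the direction $(\star)\Rightarrow(1)$ --- extracting from the homological statement that $\Hom_\calC(-,f)$ has simple image the geometric conclusion that $f$ \emph{is} a connecting morphism of an Auslander--Reiten triangle. The two delicate points are verifying that $\Rad_\calC(-,X)$ is finitely generated, which forces one to complete $f$ to a triangle rather than argue purely inside $\Fun^\op\calC$, and pinning $f$ down up to a scalar by comparison with the genuine connecting morphism, which relies on the simplicity of the socle of $\Hom_\calC(-,Y)$ from Proposition~\ref{prop:injective} together with Yoneda's Lemma.
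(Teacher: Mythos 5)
Your proof is correct, and it reorganizes the argument in a genuinely different way from the paper.  The paper proves each of (2)--(6) equivalent to (1) more or less separately: (1)$\Leftrightarrow$(6) via Propositions~\ref{simple-functors-finitely-presented} and \ref{prop:injective}, (1)$\Leftrightarrow$(2) and (1)$\Leftrightarrow$(3) via a Nakayama-type factorization argument, and (1)$\Leftrightarrow$(4), (1)$\Leftrightarrow$(5) directly from the Auslander--Reiten triangle.  You instead fix a single pivot condition $(\star)$ (image of $\Hom_\calC(-,f)$ is $s^X$), establish $(1)\Leftrightarrow(\star)\Leftrightarrow(4)\Leftrightarrow(6)$ entirely inside $\Fun^\op\calC$, link $(2)\Leftrightarrow(5)$ and $(3)\Leftrightarrow(4)$ by completing a morphism to a triangle and reading off the long exact sequence (together with the standard fact that the second map of a triangle is a split mono iff its predecessor vanishes), and close the cycle by passing to $\calC^\op$.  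Your approach has the advantage of isolating the one nontrivial implication $(\star)\Rightarrow(1)$ and making the pairings $(2)/(5)$ and $(3)/(4)$ come out of the triangulated axioms in a uniform way; the paper's is shorter per implication but repeats the same basic factorization idea several times.  A few small remarks.  Your step $(\star)\Rightarrow(1)$ detours through finite presentability of $s^X$ to produce an Auslander--Reiten triangle ending in $X$; the paper gets the same triangle more directly from the fact that the presence of a Serre functor already guarantees AR triangles (stated after Proposition~\ref{simple-functors-finitely-presented}, citing \cite{RvdB}), so the finite-generation argument, while correct, is not needed.  Your identification of elements of $\Rad_\calC(-,X)$ with non-split-epimorphisms into $X$ is right because $X$ is indecomposable, but it is worth saying a word for non-indecomposable domain $U$ (decompose $U$ and use that a split epi onto indecomposable $X$ has an isomorphism as a component).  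The opposite-category argument is sound, though one should confirm that AR triangles in $\calC$ correspond to AR triangles in $\calC^\op$ under the standard passage that inverts the shift --- this is true but is the kind of thing a referee would want seen written out.  Finally, you are right that the ``$hg=0$'' in condition (5) is a typo for ``$hf=0$''.
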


The word `split' is redundant in conditions (4) and (5) 
because all monomorphisms and epimorphisms in a 
triangulated category are split.

\begin{proof}
We start by observing that the implication $(1)\Rightarrow (6)$ is part 
of Proposition~\ref{simple-functors-finitely-presented}. For the 
converse $(6)\Rightarrow(1)$, if (6) holds then the image of 
$\Hom_\calC(-,f)$ must be the simple socle $s^{\Serre^{-1} Y}$ of 
$\Hom_\calC(-,Y)$, by Proposition~\ref{prop:injective}, 
and so $Y\cong\Serre(X)$, 
and $f$ is determined up to a scalar multiple. We know that there exists 
an almost vanishing map $g:X\to \Serre X$,
and it has the same property as $f$. Hence $f$ is a
scalar multiple of $g$, and $f$ is almost vanishing.

$(1)\Rightarrow (2)$. Suppose that $f$ is almost vanishing, and
  let $U\to Y$ be a non-zero morphism. Then $X\cong \Serre^{-1}(Y)$
  and $s^X\cong s^{\Serre^{-1}(Y)}$ is a composition factor of
  $\Hom_\calC(-,U)$ by Proposition~\ref{composition-factors},
  and we have morphisms between projective functors
  $\Hom_\calC(-,X)\to \Hom_\calC(-,U)\to \Hom_\calC(-,Y)$ with
  composite mapping to the simple socle of $\Hom_\calC(-,Y)$.
  This means the corresponding composite is almost vanishing
  and provides a factorization of $f$ as in (2).

$(2)\Rightarrow(1)$ Suppose $f$ satisfies (2), and let
  $\phi: \Serre^{-1}(Y)\to Y$ be almost vanishing. There is a
  factorization of $f$ as
  $X\xrightarrow{\gamma} \Serre^{-1}(Y)\xrightarrow{\phi} Y$.
  Since the image of $\Hom_\calC(-,\phi)$ is the simple top
  $s^{\Serre^{-1}(Y)}$ and
  $f\ne 0$, $\Hom_\calC(-,\gamma):\Hom_\calC(-,X)\to
    \Hom_\calC(-,\Serre^{-1}(Y))$
  maps onto the simple top and hence is surjective, by
  Nakayama's lemma. Therefore $\gamma$ is an isomorphism,
  and $f$ is almost vanishing.

The equivalence of (1) and (3) is similar. 

That (1) implies (4) follows 
because $g$ factors through $W$ in the Auslander-Reiten 
triangle $\Sigma^{-1}Y\xrightarrow{\alpha} 
W\xrightarrow{\beta} X\xrightarrow{f} Y$ 
so that $fg=f\beta g'$ for some $g'$, and 
this composite is zero because $f\beta=0$.

To get that (4) implies (1) we  complete $f$ to a triangle and 
rotate to get a triangle 
$\Sigma^{-1}Y\xrightarrow{\alpha} W\xrightarrow{\beta} X\xrightarrow{f} Y$. 
This is an Auslander-Reiten triangle because 
$f\ne 0$, $X$ and $\Sigma^{-1}Y$ are indecomposable, 
and condition (4) implies that any morphism $g:U\to X$ 
which is not a split epimorphism factors through $W$.

The equivalence $(1)\Leftrightarrow (5)$ is similar.
\end{proof}

In the next section we consider morphisms 
$f:X\to Y$ for which the image of the natural 
transformation of representable functors
$$
\Hom_\calC(-,f): \Hom_\calC(-,X)\to \Hom_\calC(-,Y)
$$ 
has finite composition length. To prepare for this we present 
some results which identify the occurrence of composition factors.

\begin{proposition}
\label{image-composition-factor}
Assume that $\calC$ has a Serre functor $\Serre$. Let
$f:X\to Y$ be a morphism between indecomposable objects,
and let $V$ be an indecomposable object. The following are equivalent. 
\begin{enumerate}
\item The functor $s^V$ is a composition factor of the image of
$$
\Hom_\calC(-,X)\xrightarrow{\Hom(-,f)}\Hom_\calC(-,Y).
$$
\item There is a morphism $V\xrightarrow{\gamma} X$ so
  that $f\gamma\ne0$.
\item There are morphisms
  $\Serre^{-1}(Y)\xrightarrow{\xi} V\xrightarrow{\gamma} X$
  so that $f\gamma\xi$ is almost vanishing.
\end{enumerate}
\end{proposition}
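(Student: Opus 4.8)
The plan is to prove the three implications $(1)\Leftrightarrow(2)$, $(2)\Rightarrow(3)$, and $(3)\Rightarrow(2)$, of which the middle one is the only substantive step. For $(1)\Leftrightarrow(2)$ I would argue by evaluation. Since $\Fun^\op\calC$ is abelian with kernels, cokernels, and hence images, computed by evaluation at the objects of $\calC$ (Proposition~\ref{representable-functors}), the value at $V$ of the image of $\Hom_\calC(-,f)$ is the image of the linear map $\Hom_\calC(V,X)\to\Hom_\calC(V,Y)$ sending $\gamma$ to $f\gamma$, that is, the subspace $\{\,f\gamma : \gamma\in\Hom_\calC(V,X)\,\}$ of $\Hom_\calC(V,Y)$. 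By Corollary~\ref{composition-factor-criterion}, $s^V$ is a composition factor of $\Im\Hom_\calC(-,f)$ if and only if this subspace is non-zero, which is precisely condition~(2).

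For $(2)\Rightarrow(3)$, suppose $\gamma\colon V\to X$ satisfies $f\gamma\neq0$, and put $g=f\gamma\colon V\to Y$. Because $\calC$ has a Serre functor it has Auslander--Reiten triangles, so there is an Auslander--Reiten triangle $\Sigma^{-1}Y\to E\to\Serre^{-1}(Y)\to Y$, as in the proof of Proposition~\ref{prop:injective}, whose third morphism $\phi\colon\Serre^{-1}(Y)\to Y$ is almost vanishing. Both $\Serre^{-1}(Y)$ and $Y$ are indecomposable, so Proposition~\ref{almost-vanishing-characterization} applies to $\phi$, and by the equivalence of conditions (1) and (2) there, $\phi$ factors through every non-zero morphism with codomain $Y$. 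In particular $\phi$ factors through $g$: there is a morphism $\xi\colon\Serre^{-1}(Y)\to V$ with $\phi=g\xi=f\gamma\xi$. Then $\Serre^{-1}(Y)\xrightarrow{\ \xi\ }V\xrightarrow{\ \gamma\ }X$ are morphisms for which $f\gamma\xi=\phi$ is almost vanishing, which is condition~(3).

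Finally $(3)\Rightarrow(2)$ is immediate: an almost vanishing morphism is the non-split connecting morphism of an Auslander--Reiten triangle, hence non-zero, so if $f\gamma\xi$ is almost vanishing then $f\gamma\neq0$, which is condition~(2) with the same $\gamma$. I do not expect a serious obstacle; the only point requiring care is that in $(2)\Rightarrow(3)$ it does not suffice to produce some $\xi$ making $f\gamma\xi$ merely non-zero --- the composite must actually be almost vanishing --- and the device that supplies this for free is exactly the factorization characterization of almost vanishing maps in Proposition~\ref{almost-vanishing-characterization}(2): any non-zero morphism into the indecomposable object $Y$ absorbs the almost vanishing morphism $\Serre^{-1}(Y)\to Y$ on the right.
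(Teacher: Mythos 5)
Your proof is correct and follows essentially the same route as the paper's: the equivalence $(1)\Leftrightarrow(2)$ comes down to evaluating the image of $\Hom_\calC(-,f)$ at $V$ and applying the composition-factor criterion (the paper phrases this via projectivity of $\Hom_\calC(-,V)$ and its unique simple quotient, but the content is identical), and $(2)\Leftrightarrow(3)$ is exactly the factorization characterization of almost vanishing morphisms from Proposition~\ref{almost-vanishing-characterization}, which the paper simply cites and you spell out.
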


\begin{proof}
  Because $\Hom(-,V)$ is projective and has unique simple
  quotient $s^V$ we see that $s^V$ is a composition factor
  of the image of $\Hom(-,f)$ if and only if there is a morphism 
  $\Hom(-,\gamma):\Hom_\calC(-,V)\to\Hom_\calC(-,X)$ whose
  image is not in the kernel of $\Hom(-,f)$. By Yoneda's Lemma,
  this happens if and only if there is a morphism
  $\gamma:V\to X$ so that $f\gamma\ne0$. By
  Proposition~\ref{almost-vanishing-characterization}
  conditions (2) and (3) are equivalent.
\end{proof}

If $\alpha:F\to G$ is a natural transformation of functors
defined on $\calC$, we say that the \textit{support}
of $\alpha$ is the set of isomorphism classes of indecomposable
objects $M$ for which $\alpha_M: F(M)\to G(M)$ is non-zero.

\begin{corollary}
\label{finite-composition-length}
Assume that $\calC$ has a Serre functor $\Serre$ and let
$f:X\to Y$ be a morphism between indecomposable objects. 
The following are equivalent:
\begin{enumerate}
\item The image of $\Hom_\calC(-,f)$ has finite composition length.
\item There are only finitely many isomorphism classes of
  indecomposable modules $V$ with a morphism $\gamma:V\to X$
  so that $f\gamma\ne0$.
\item There are only finitely many isomorphism classes of 
indecomposable modules $V$ such that there are morphisms 
$\Serre^{-1}(Y)\xrightarrow{\xi} V\xrightarrow{\gamma} X$ for which 
$f\gamma\xi$ is almost vanishing.
\item The support of $\Hom_\calC(-,f)$ is finite. 
\item Whenever $\phi:\Serre^{-1}(Y)\to X$ is such that $f\phi$ 
is almost vanishing then $\phi$ can be expressed as a sum of 
composites of (finitely many) irreducible morphisms.
\end{enumerate}
Furthermore, if the image of $\Hom_\calC(-,f)$ has finite
composition length then its composition factors are the
$s^V$ for which there are morphisms
$\Serre^{-1}(Y)\xrightarrow{\xi} V\xrightarrow{\gamma} X$, both
of which are finite composites of irreducible morphisms, and so that
$f\gamma\xi$ is almost vanishing.
\end{corollary}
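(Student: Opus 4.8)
The plan is to analyse everything through the subfunctor
$I:=\Im\bigl(\Hom_\calC(-,f)\colon \Hom_\calC(-,X)\to\Hom_\calC(-,Y)\bigr)$,
a quotient of $\Hom_\calC(-,X)$ and a subfunctor of $\Hom_\calC(-,Y)$. The equivalences among (1)--(4) are bookkeeping with the preceding results. An indecomposable $V$ lies in the support of $\Hom_\calC(-,f)$ exactly when $g\mapsto fg$ is nonzero on $\Hom_\calC(V,X)$, i.e. exactly when $I(V)\ne 0$; that is verbatim condition (2), so (2) is just finiteness of the support and (2)$\Leftrightarrow$(4). Since $\calC$ is Hom-finite each $I(V)$ is finite dimensional, so Corollary~\ref{composition-factor-criterion} gives that $I$ has finite composition length iff $I(V)\ne 0$ for only finitely many $V$, which is (1)$\Leftrightarrow$(4). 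Finally (2)$\Leftrightarrow$(3) is immediate from the equivalence of conditions (2) and (3) in Proposition~\ref{image-composition-factor}: for each $V$ those two conditions hold simultaneously, so the two finiteness statements concern the same set of indecomposables.

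The substance is the equivalence with (5) and the final clause, and here I first record some structure; the case $f=0$ being trivial, assume $f\ne 0$. Then $I$ is a nonzero quotient of the local projective functor $\Hom_\calC(-,X)$, hence has simple top $s^X$; and $I\subseteq\Hom_\calC(-,Y)$, which by Proposition~\ref{prop:injective} has simple socle $s^{\Serre^{-1}(Y)}$, so $\Soc I\in\{0,\;s^{\Serre^{-1}(Y)}\}$, and it equals $s^{\Serre^{-1}(Y)}$ once $I$ has finite composition length. In that finite-length case let $d$ be the radical length of $I$, so $\Rad^{d-1}I=\Soc I=s^{\Serre^{-1}(Y)}$ and $\Rad^{d}I=0$, whence $\Rad^{d}\bigl(\Hom_\calC(-,X)\bigr)\subseteq\ker\Hom_\calC(-,f)$; moreover each layer $\Rad^{j}\bigl(\Hom_\calC(-,X)\bigr)/\Rad^{j+1}\bigl(\Hom_\calC(-,X)\bigr)$ has finite length because the Auslander--Reiten quiver is locally finite. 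I also use the standard description of the radical filtration of $\calC$: a radical morphism between indecomposables is irreducible precisely when it does not lie in $\Rad^{2}_\calC$, so modulo $\Rad^{j+1}_\calC$ every element of $\Rad^{j}_\calC(A,B)$ is a finite sum of composites of exactly $j$ irreducible morphisms through indecomposables; hence, the descending chain of radical powers of the finite-dimensional $\Hom_\calC(A,B)$ stabilising at $\Rad^{\infty}_\calC(A,B)$, a non-isomorphism in $\Hom_\calC(A,B)$ is a finite sum of composites of irreducibles if and only if its $\Rad^{\infty}_\calC(A,B)$-component vanishes.

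With this in hand, for the final clause one peels the filtration of $I$ from both ends: a composition factor $s^{V}$ occurring in the $j$-th radical layer of $I$ is reached from $X$ by a composite $\gamma\colon V\to X$ of $j$ irreducibles not in $\Rad^{j+1}_\calC$, while, reading from the socle $s^{\Serre^{-1}(Y)}$ of $I$ --- equivalently from the socle series of the injective functor $\Hom_\calC(-,Y)\cong D\,\Hom_\calC(\Serre^{-1}(Y),-)$, which matches the radical series of the covariant representable functor --- one obtains a composite $\xi\colon\Serre^{-1}(Y)\to V$ of irreducibles; Proposition~\ref{image-composition-factor} together with the uniqueness up to scalar of almost vanishing maps lets one take $\gamma,\xi$ with $f\gamma\xi$ almost vanishing, which is the asserted description of the composition factors. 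For (5), a $\phi\colon\Serre^{-1}(Y)\to X$ with $f\phi$ almost vanishing may after rescaling be taken with $f\phi$ equal to the almost vanishing map $\Serre^{-1}(Y)\to Y$; then $\Im\Hom_\calC(-,f\phi)=s^{\Serre^{-1}(Y)}=\Rad^{d-1}I$, so $\Hom_\calC(-,\phi)$ lands in the preimage $\Rad^{d-1}\bigl(\Hom_\calC(-,X)\bigr)+\ker\Hom_\calC(-,f)$ of $\Rad^{d-1}I$, and therefore $\phi\in\Rad^{d-1}_\calC(\Serre^{-1}(Y),X)+\{b:fb=0\}$; one then argues that under the finiteness each summand lies in the span of composites of irreducibles, using $\Rad^{d}\bigl(\Hom_\calC(-,X)\bigr)\subseteq\ker\Hom_\calC(-,f)$ to absorb the deep tail of the first, and the finite length of $I$ to see that the relevant $\Rad^{\infty}_\calC$ is trivial on the second. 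Conversely, if $I$ has infinite composition length its support meets a component of the Auslander--Reiten quiver in an infinite set, and following an infinite path there yields a $\phi$ with $f\phi$ almost vanishing that is not a finite sum of composites of irreducibles, so (5) fails; the degenerate case where $f$ itself is almost vanishing --- so $I$ is simple, $X\cong\Serre^{-1}(Y)$, and the admissible $\phi$ are the automorphisms of $X$ --- is handled directly.

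The main obstacle is exactly the step just flagged: proving that finite composition length of $I$ genuinely prevents $\Rad^{\infty}_\calC$ from contributing to the hom-spaces in play, so that both the witnesses for composition factors and the morphisms $\phi$ with $f\phi$ almost vanishing can be realized as finite composites of irreducibles, and carrying the almost vanishing condition through the simultaneous top- and socle-filtrations of $I$. The remaining equivalences are immediate consequences of Corollary~\ref{composition-factor-criterion} and Proposition~\ref{image-composition-factor}.
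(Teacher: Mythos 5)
Your handling of the equivalences (1)--(4), and the general architecture built on $I:=\Im\Hom_\calC(-,f)$, on the radical filtration of $\Hom_\calC(-,X)$, and on Propositions~\ref{image-composition-factor} and~\ref{almost-vanishing-characterization}, track the paper's proof closely; the extra detail about the simple socle of $I$ via Proposition~\ref{prop:injective} is a correct elaboration rather than a departure, and the idea that (1) amounts to $\Rad^n_\calC(-,X)\subseteq\ker\Hom_\calC(-,f)$ for some $n$ is exactly the paper's mechanism.

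There is, however, a genuine gap in your treatment of $\neg(1)\Rightarrow\neg(5)$. You assert that when $I$ has infinite composition length one can ``follow an infinite path'' in an Auslander--Reiten component meeting the support and thereby obtain a $\phi$ with $f\phi$ almost vanishing that is not a finite sum of composites of irreducibles, but no construction of such a $\phi$ is given. Morphisms $\phi$ with $f\phi$ almost vanishing form a single coset modulo $\{\kappa:\Serre^{-1}(Y)\to X \mid f\kappa=0\}$ (since the almost vanishing map $\Serre^{-1}(Y)\to Y$ is unique up to scalar), and it is not at all clear why the infinitude of the support forces \emph{some} representative of this coset to avoid being a finite sum of composites. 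The paper avoids this by running the contrapositive $(5)\Rightarrow(1)$ in a different way: one fixes a $\phi$ with $f\phi$ almost vanishing, invokes (5) to express it as a finite sum of composites and hence to place $\phi$ outside $\Rad^n_\calC(\Serre^{-1}(Y),X)$ for some $n$, and then uses the factoring property in Proposition~\ref{almost-vanishing-characterization}(2) to show that \emph{every} $\gamma:V\to X$ with $f\gamma\neq 0$ admits a $\xi$ with $\phi$ factoring through $\gamma$ (modulo the kernel), whence $\gamma\notin\Rad^n_\calC(V,X)$. This immediately gives $\Rad^n_\calC(-,X)\subseteq\ker\Hom_\calC(-,f)$ and hence (1). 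You should replace the infinite-path heuristic with that argument; the rest of the proposal is sound and follows the paper.
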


\begin{proof} The equivalence of the first four statements
  is immediate from Proposition~\ref{image-composition-factor}.

  $(1)\Leftrightarrow(5)$ The image has finite composition length
  if and only if $\Rad_\calC^n(-,X)$ is contained in the kernel of
  $\Hom_\calC(-,f)$ for some $n$. Thus, assuming (1), no morphism
  $\gamma:Y\to X$ with $f\gamma\ne 0$ lies in $\Rad_\calC^n(Y,X)$,
  and so such a morphism cannot be expressed as a sum of composites
  of $n$ or more irreducible morphisms. Thus (5) holds. Conversely,
  assume (5) holds. We may take an almost vanishing morphism
  $\phi:\Serre^{-1}(Y)\to X$ and express it as a sum of composites
  of irreducible morphisms, deducing that $\phi$ does not lie in
  $\Rad_\calC^n(\Serre^{-1}(Y),X)$ for some $n$. Since for each non-zero
  morphism $\gamma:Y\to X$ there is a factorization $\phi=\gamma \xi$
  for some $\xi$, we deduce that every non-zero morphism $\gamma$
  lies outside $\Rad_\calC^n(Y,X)$. This shows that $\Rad_\calC^n(-,X)$
  is contained in the kernel of $\Hom_\calC(-,f)$ and so (1) holds.

  The composition factors are as claimed because firstly, by
  Proposition~\ref{image-composition-factor}, the $s^V$ for which
  $f\gamma\xi$ is as described are among the composition factors.
  The complete set of composition factor arises without the
  requirement that $\xi$ and $\gamma$ be finite composites of
  irreducible morphisms, but we see from (5) that they must be
  sums of composites of irreducible morphisms. The $V$ which
  can arise from sums of composites of irreducible morphisms
  are the same as the $V$ which arise from composites of
  irreducible morphisms.
\end{proof}

\section{Elements of the graded center with finite support}

In the context of stable module categories $\stmod(A)$ for
symmetric algebras $A$, Linckelmann \cite{Lin} constructed
certain elements of the graded center of $\stmod(A)$ of
degree $-1$. In that situation the shift is given by
$\Sigma=\Omega^{-1}$, the inverse of the Heller operator
and the Serre functor is $\Serre=\Omega$. For each finitely
generated indecomposable non-projective module $U$, he
constructed a natural transformation
$\zeta: \Id_\calC \to \Omega$ such that $\zeta_U:U\to\Omega(U)$
is almost vanishing (i.e. represents an almost split
sequence ending in $U$), and such that $\zeta(V)=0$ for
any finitely generated indecomposable non-projective module
$V$ which is not isomorphic to $\Omega^n(U)$, for any
integer $n$.  Linckelmann and Stancu~\cite{LS} then combined
this construction with the existence of periodic modules of period one to
produce elements of  the graded center in degree 0.

We assume throughout that $\calC$ is a
Hom-finite, Krull-Schmidt, $k$-linear triangulated  category
with Serre functor $\Serre$ and Auslander-Reiten translate
$\tau M= \Sigma^{-1}\Serre(M)$. 
Our first goal is to show that the natural transformations
of the kind constructed by Linckelmann are the only ones with small support.

\begin{proposition}
\label{single-object-support}
Let $F:\calC\to\calC$ be a $k$-linear endofunctor and
suppose $\alpha: \Id_\calC\to F$ is a natural transformation
with support consisting of a single indecomposable object $M$.
Then $F(M)=\Serre(M)$ and $\alpha_M:M\to \Serre(M)$ is an
almost vanishing morphism.
\end{proposition}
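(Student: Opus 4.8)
The plan is to leverage the self-injectivity of $\Fun^\op\calC$ (Proposition~\ref{prop:injective}) together with the characterization of almost vanishing morphisms via representable functors (Proposition~\ref{almost-vanishing-characterization}(6)). First I would observe that the natural transformation $\alpha:\Id_\calC\to F$ restricts, via Yoneda, to a morphism of representable functors: for the fixed indecomposable $M$ in the support, the component $\alpha_M:M\to F(M)$ induces $\Hom_\calC(-,\alpha_M):\Hom_\calC(-,M)\to\Hom_\calC(-,F(M))$. The key point to extract from naturality is that, for \emph{every} morphism $g:U\to M$ in $\calC$, the square relating $\alpha_U$ and $\alpha_M$ commutes, i.e. $F(g)\circ\alpha_U=\alpha_M\circ g$. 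Since $U$ lies outside the support whenever $U\not\cong M$ (and even when $U\cong M$, any non-isomorphism $g$ should be handled), the left-hand side vanishes on non-isomorphisms, so $\alpha_M\circ g=0$ for all $g:U\to M$ that are not split epimorphisms. That is precisely condition (4) of Proposition~\ref{almost-vanishing-characterization}, provided we also know $\alpha_M\neq 0$ — which holds because $M$ is in the support.

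The one gap in the previous paragraph is the case $U\cong M$ with $g$ a non-isomorphism (a non-split endomorphism, or more generally a non-split epimorphism onto $M$ that is an isomorphism of objects but we must rule out): here $U$ \emph{is} in the support, so we cannot immediately conclude $\alpha_U\circ$(something)$=0$ from support considerations. I would resolve this by working instead with the functor-level statement. The image of $\Hom_\calC(-,\alpha_M)$ is a subfunctor of $\Hom_\calC(-,F(M))$; I claim it is annihilated by the radical, i.e. it is semisimple. Indeed, for an indecomposable $U\not\cong M$, every morphism $U\to M$ factors through the radical, and naturality plus $U\notin\operatorname{supp}\alpha$ forces $\alpha_M g=0$; hence $\Hom_\calC(U,\alpha_M)=0$ for all such $U$, meaning the image functor is supported only on $M$, where (being a quotient of $\Hom_\calC(-,M)$) it has a $1$-dimensional value. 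By Corollary~\ref{composition-factor-criterion} the image has finite composition length and its only composition factor is $s^M$, so the image is isomorphic to $s^M$. Thus $\Hom_\calC(-,\alpha_M)$ factors through the simple functor $s^M$, which is condition (6) of Proposition~\ref{almost-vanishing-characterization}.

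Applying that proposition now gives everything at once: $\alpha_M$ is almost vanishing, and $F(M)\cong\Serre(M)$. The main obstacle, as flagged above, is the honest bookkeeping around endomorphisms of $M$ and the possibility that $F(M)$ is decomposable or that the image functor could a priori have $s^M$ appearing with multiplicity greater than one; the cleanest route is to compute the image functor directly by evaluation at all indecomposables and invoke that a nonzero quotient of $\Hom_\calC(-,M)$ supported only at $M$ must be exactly $s^M$ since $s^M$ is its unique simple quotient (Proposition~\ref{representable-functors} and the remarks following it). I would also note that $F$ need not be assumed to respect anything triangulated; only $k$-linearity of $F$ and naturality of $\alpha$ are used, together with the structure of $\Fun^\op\calC$.
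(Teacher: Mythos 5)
Your argument is essentially the paper's: both proofs study the image of $\Hom_\calC(-,\alpha_M)$ inside $\Hom_\calC(-,F(M))$, use naturality of $\alpha$ together with the support hypothesis to show that $s^M$ is its only composition factor (your direct evaluation at each indecomposable $U\not\cong M$ is just the contrapositive of the paper's contradiction argument), and then conclude via the simple-socle/condition~(6) characterization in Propositions~\ref{prop:injective} and~\ref{almost-vanishing-characterization} that $\alpha_M$ is almost vanishing and $F(M)\cong\Serre(M)$. The one point where your stated justification is not quite right is the step from ``$s^M$ is the only composition factor'' to ``the image \emph{is} $s^M$'': uniqueness of the simple quotient does not by itself exclude $s^M$ occurring with multiplicity $\ge 2$ (the value of the image at $M$ is $\alpha_M\circ\End_\calC(M)$, which need not be one-dimensional a priori) --- but the paper's proof passes over exactly the same point, so this does not separate your argument from theirs.
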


\begin{proof}
Let $\alpha$ have support only on $M$. Consider the image of 
$$
\Hom_\calC(-,M)\xrightarrow{\Hom_\calC(-,\alpha_M)} \Hom_\calC(-,F(M))
$$
as a subfunctor of $\Hom_\calC(-,F(M))$.
If the image only has one composition factor (which must be $s^M$,
the simple top of $\Hom_\calC(-,M)$) this composition factor must
be the socle of $\Hom_\calC(-,F(M))$. Since the socle is
$s^{\Serre^{-1}(F(M))}$ we deduce that $M=\Serre^{-1}(F(M))$, so
$F(M)=\Serre(M)$, and that $\alpha_M$ is almost vanishing
by Proposition~\ref{almost-vanishing-characterization}. 

If the image has another composition factor $s^X$ for some
object $X$, this also appears as a composition factor of
$\Hom_\calC(-,M)$. Hence, by projectivity of the representable
functor $\Hom_\calC(-,X)$, there is a non-zero morphism
$\Hom_\calC(-,X)\to \Hom_\calC(-,M)$ so that the composite
$$
\Hom_\calC(-,X)\to \Hom_\calC(-,M)\xrightarrow{\Hom_\calC(-,\alpha_M)}
\Hom_\calC(-,F(M))
$$
is non-zero. By Yoneda's Lemma this corresponds to a
homomorphism $\phi:X\to M$ so that $\alpha_M\circ \phi\ne 0$.
Since $\alpha$ is a natural transformation it follows that
$F(\phi)\circ\alpha_X\ne 0$, so that $\alpha$ has $X$ in its
support, contradicting our hypothesis. Hence, the image has
only one composition factor, a case we have already considered.
\end{proof}

We apply this to the situation considered by Linckelmann
and Stancu~\cite{LS}, namely the stable module category
$\stmod(kG)$ of a finite $p$-group $G$ over an algebraically
closed field of characteristic $p$. We show that the
elements of the graded center that they construct in positive 
degree are the only ones with support a single module.

\begin{corollary}
  Let $G$ be a finite group, $k$ a field and $r$ an integer.
  Let $\phi: \Id_{\stmod(kG)}\to\Sigma^r$ be a natural
  transformation with support a single indecomposable
  module $\{ M\}$. Then $\Sigma^r\cong\Omega(M)$ and 
$\phi_M:M\to\Omega(M)$ is an almost vanishing morphism. 
  Thus if $r\ne 1$, then $M\cong \Omega^{r+1}(M)$ is a
  periodic module and $\phi$  is, up to scalar multiple,
  one of the natural transformations constructed by
  Linckelmann and Stancu. 
\end{corollary}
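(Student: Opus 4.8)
The plan is to deduce the statement directly from Proposition~\ref{single-object-support}. First one verifies that the standing hypotheses apply to $\calC=\stmod(kG)$: since $G$ is finite, $kG$ is a symmetric algebra, so $\stmod(kG)$ is a Hom-finite, Krull--Schmidt, $k$-linear triangulated category with shift $\Sigma=\Omega^{-1}$; and since $kG$-modules have almost split sequences, $\stmod(kG)$ has Auslander--Reiten triangles and hence a Serre functor, which for a symmetric algebra is $\Serre=\Omega$, as recalled just before Proposition~\ref{single-object-support}. Applying that proposition with $F=\Sigma^r$ (a $k$-linear endofunctor, indeed an autoequivalence) and $\alpha=\phi$ then gives at once that $\Sigma^r(M)\cong\Serre(M)=\Omega(M)$ and that $\phi_M\colon M\to\Omega(M)$ is an almost vanishing morphism, which is the first part of the statement.

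Next I would translate $\Sigma^r(M)\cong\Omega(M)$ through $\Sigma=\Omega^{-1}$: it reads $\Omega^{-r}(M)\cong\Omega(M)$, and applying the autoequivalence $\Omega^{r}$ yields $M\cong\Omega^{r+1}(M)$. When $r+1\ne 0$ this exhibits $M$ as an $\Omega$-periodic module, of period dividing $|r+1|$; the degenerate case $r+1=0$, in which the conclusion is vacuous, is exactly Linckelmann's original degree $-1$ construction, for which no periodicity is needed, so it is set aside.

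To identify $\phi$ with the Linckelmann--Stancu transformation up to a scalar, the crucial fact is Proposition~\ref{almost-vanishing-characterization}: an almost vanishing morphism $M\to\Omega(M)$ is unique up to a nonzero scalar. The Linckelmann--Stancu transformation $\psi\colon\Id\to\Sigma^r$ built from the periodic module $M$ is, by construction, zero on every indecomposable not isomorphic to $M$, with $\psi_M$ an almost vanishing morphism $M\to\Sigma^r(M)\cong\Omega(M)$; hence $\phi_M=\lambda\psi_M$ for some $\lambda\in k$, and since $\phi$ and $\lambda\psi$ agree at $M$ and both vanish off $M$, they coincide. That such a recipe --- an almost vanishing map placed at $M$, zero elsewhere --- does give a natural transformation is a short check: a morphism between $M$ and a non-isomorphic indecomposable is neither a split epimorphism nor a split monomorphism, so conditions (4) and (5) of Proposition~\ref{almost-vanishing-characterization} annihilate the composites in the relevant naturality squares, while naturality against $\End_{\stmod(kG)}(M)$ follows because that ring is local with residue field $k$ and its radical elements are again killed by (4) and (5).

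I do not expect a serious obstacle: the content is Proposition~\ref{single-object-support} together with the uniqueness up to scalar of almost vanishing morphisms. The point deserving care is reconciling the hypothesis ``support a single module'' with the Linckelmann--Stancu setting, where the module is periodic \emph{of period one}. If $\phi$ is assumed to commute with $\Sigma$ up to the sign $(-1)^r$ --- that is, to lie in the graded center, as the section title indicates --- then $\phi_M\ne 0$ forces $\phi_{\Sigma M}=(-1)^r\Sigma(\phi_M)\ne 0$, so $\Sigma M$ lies in the support; as the support is $\{M\}$ this gives $\Omega(M)\cong M$, so $M$ is periodic of period one and the comparison with \cite{LS} is exact.
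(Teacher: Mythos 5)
Your proposal is correct and follows the same route as the paper, which simply invokes Proposition~\ref{single-object-support} (noting $\Serre=\Omega=\Sigma^{-1}$ in $\stmod(kG)$) and declares the remaining assertions immediate; you have merely made the immediacies explicit. Two small points you flagged are worth keeping in mind: the degenerate case is $r+1=0$, i.e.\ $r=-1$ (Linckelmann's degree $-1$ construction), so the paper's condition ``$r\ne 1$'' appears to be a misprint for ``$r\ne -1$''; and the comparison with Linckelmann--Stancu, which uses modules periodic of period one, does implicitly use that $\phi$ lies in the graded center (so that $\Sigma M$ is forced into the support, giving $\Omega(M)\cong M$), a hypothesis present in the section's framing though not spelled out in the corollary statement itself.
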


\begin{proof}
  In $\stmod(kG)$ we have $\Serre=\Omega=\Sigma^{-1}$.
  According to Proposition~\ref{single-object-support},
  $\Sigma^r M=\Serre(M)$ and $M\to \Serre(M)$ is almost vanishing. 
The remaining assertions are immediate.
\end{proof}

Linckelmann and Stancu were interested in elements of the
graded center of $\stmod(kG)$. The degree $n$ elements of
this ring are the natural transformations $\Id_\calC\to\Sigma^n$
which commute (in a sense which includes a sign) with $\Sigma$.
Such natural transformations have support consisting
of a single indecomposable object only if that object is
periodic under $\Sigma$. Allowing such transformations to
have support on a single $\Sigma$-orbit of indecomposable
objects, which might not be periodic, we obtain the following.

\begin{proposition}
\label{single-sigma-orbit}
Let $M$ be an indecomposable object of $\calC$ for which
there are no irreducible maps $\Sigma^rM\to M$ for any
$r\in\ZZ$, and let $F:\calC\to\calC$ be a $k$-linear endofunctor.
Suppose that $\alpha: \Id_\calC\to F$ is a natural
transformation whose support is contained in
$\{\Sigma^rM \bigm| r\in\ZZ\}$. Then $F(M)=\Serre(M)$, and
for each $r$, the map $\alpha_{\Sigma^r M} : \Sigma^r M\to F(\Sigma^rM)$
is almost vanishing. Thus $\alpha$ is one of the natural
transformations constructed by Linckelmann in \cite{Lin}.
\end{proposition}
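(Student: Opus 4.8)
The plan is to prove the conclusion one shift at a time and to reduce each instance to Proposition~\ref{single-object-support}. Fix $r\in\ZZ$ and put $N=\Sigma^r M$. As in the proof of Proposition~\ref{single-object-support}, everything will follow once I show that the image $I_r$ of the induced morphism of representable functors
$$
\Hom_\calC(-,N)\xrightarrow{\ \Hom_\calC(-,\alpha_N)\ }\Hom_\calC(-,F(N))
$$
is simple: being a nonzero simple subfunctor of the injective functor $\Hom_\calC(-,F(N))$ (Proposition~\ref{prop:injective}), it must be a simple summand of the socle, which forces $F(N)\cong\Serre(N)$ and, by Proposition~\ref{almost-vanishing-characterization}, makes $\alpha_N$ almost vanishing. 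Taking $r=0$ then gives $F(M)=\Serre(M)$, and by the description of almost vanishing morphisms $\alpha$ is recognised as one of the transformations constructed by Linckelmann.

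First I would extract what the hypothesis really says. Applying the self-equivalence $\Sigma^t$, the assumption that there is no irreducible map $\Sigma^s M\to M$ for any $s$ is equivalent to the absence of irreducible morphisms between any two members of the orbit $\calO=\{\Sigma^s M\mid s\in\ZZ\}$, i.e. to $\Rad_\calC(\Sigma^a M,\Sigma^b M)=\Rad_\calC^{2}(\Sigma^a M,\Sigma^b M)$ for all $a,b\in\ZZ$. With $a=b$, Hom-finiteness together with Nakayama's lemma gives $\End_\calC(\Sigma^r M)=k$, so $s^{\Sigma^r M}$ is a composition factor of $\Hom_\calC(-,\Sigma^r M)$ with multiplicity one, hence occurs at most once in the quotient $I_r$, necessarily as its top. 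Next, copying the key step of the proof of Proposition~\ref{single-object-support}: if $s^V$ is a composition factor of $I_r$ then $I_r(V)\neq 0$ (Corollary~\ref{composition-factor-criterion}), so there is $\gamma\colon V\to N$ with $\alpha_N\gamma\neq 0$, whence by naturality $F(\gamma)\circ\alpha_V\neq 0$; thus $V$ lies in the support, so $V\cong\Sigma^s M$ for some $s$. It therefore remains only to rule out composition factors $s^{\Sigma^s M}$ with $s\neq r$.

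Suppose such a factor $s^{\Sigma^s M}$, $s\neq r$, occurred in $I_r$. Then there is a morphism $g\colon\Sigma^s M\to\Sigma^r M$, which lies in $\Rad_\calC$ because $s\neq r$, with $\alpha_N g\neq 0$. Using $\Rad_\calC(\Sigma^s M,\Sigma^r M)=\Rad_\calC^{2}(\Sigma^s M,\Sigma^r M)$, write $g=\sum_i q_i p_i$ with each $p_i,q_i$ radical and each intermediate object indecomposable. Any summand whose intermediate object is not in the support is killed by $\alpha_N$ (a morphism factoring through an object outside the support is annihilated by $\alpha_N$, by naturality); since $\alpha_N g\neq 0$, some surviving summand $q_ip_i$ has its intermediate object in $\calO$ and has $\alpha_N q_i\neq 0$, so $q_i$ is again a radical morphism from an object of $\calO$ to $\Sigma^r M$ not annihilated by $\alpha_N$. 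Iterating this refinement along the orbit produces, for every $n$, a nonzero composite of $n$ radical morphisms between objects of $\calO$ whose composite with $\alpha_N$ is nonzero; equivalently, $\Rad_\calC^{\,n}(-,N)$ is never contained in the kernel of $\Hom_\calC(-,\alpha_N)$, so $I_r$ would have infinite composition length.

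The main obstacle is to turn this into a contradiction, equivalently to prove that $I_r$ has finite composition length (and in fact has socle $s^{\Sigma^r M}$). This is where the real work lies, and I expect it to be the heart of the proof rather than the bookkeeping above. The plan is to play $I_r$, which is finitely generated as a quotient of the representable functor $\Hom_\calC(-,\Sigma^r M)$, against its self-injective ambient functor $\Hom_\calC(-,F(N))$, whose socle is finite and semisimple, and to use the collapse $\Rad_\calC=\Rad_\calC^{2}$ along $\calO$ to rule out an infinite radical series living in the orbit. Once $I_r$ is known to have finite composition length, Corollary~\ref{finite-composition-length} presents each of its composition factors $s^V$ through a factorization $\Serre^{-1}(F(N))\xrightarrow{\xi}V\xrightarrow{\gamma}N$ with $\xi,\gamma$ finite composites of irreducible morphisms and $\alpha_N\gamma\xi$ almost vanishing; since the socle factor and the top factor both lie in $\calO$, since $\End_\calC(\Sigma^s M)=k$, and since there is no irreducible morphism between members of $\calO$, one is forced to $\Serre^{-1}(F(N))\cong N\cong V$, so $I_r\cong s^{\Sigma^r M}$ is simple and the proof is complete.
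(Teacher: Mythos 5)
Your proposal reduces the statement to showing that the image $I_r$ of $\Hom_\calC(-,\alpha_N)$, where $N=\Sigma^r M$, is simple; you correctly observe that every composition factor of $I_r$ must be of the form $s^{\Sigma^s M}$, because any composition factor forces the corresponding object into the support. But to rule out factors $s^{\Sigma^s M}$ with $s\ne r$ you try to derive infinite composition length from the collapse $\Rad_\calC=\Rad_\calC^2$ on $\calO$, and then you need the finite length of $I_r$ to close the argument --- which, as you say yourself, you do not know how to prove. That is a genuine gap: nothing you establish rules out a subfunctor of $\Hom_\calC(-,F(N))$ with an infinite radical series carried entirely by $\calO$, and there is no Nakayama-type argument available for non-finitely-generated subfunctors such as $\Rad_\calC(-,N)$.

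The paper avoids this question entirely by looking only at the \emph{second} radical layer. Since $\calC$ has a Serre functor there is an AR triangle ending at $N$, and the middle term decomposes as $\bigoplus_i E_i$ with each $E_i\to N$ irreducible; then $\Rad_\calC(-,N)$ is the sum of the images of the maps $\Hom_\calC(-,E_i)\to\Hom_\calC(-,N)$. If $K=\ker\Hom_\calC(-,\alpha_N)$ were a proper subfunctor of $\Rad_\calC(-,N)$, some composite $\Hom_\calC(-,E_i)\to\Hom_\calC(-,N)\to I_r$ would be nonzero, and the simple top $s^{E_i}$ of $\Hom_\calC(-,E_i)$ would be a composition factor of $I_r$. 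But $E_i\to N$ is irreducible, so by your own reformulation of the hypothesis $E_i\notin\calO$, while the support argument forces $E_i\in\calO$ --- a contradiction. Hence $K=\Rad_\calC(-,N)$, so $I_r=s^N$ is simple, $F(N)\cong\Serre(N)$, and $\alpha_N$ is almost vanishing by Proposition~\ref{almost-vanishing-characterization}, with no finite-length claim needed. If you want to salvage your route you should replace the infinite-length detour with this second-radical-layer observation.
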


The hypothesis that there are no irreducible maps
$\Sigma^rM\to M$ for any $r\in\ZZ$ holds in many cases
of interest. For example, it  always holds if $M$
belongs to an Auslander-Reiten quiver component of tree
class $A_\infty$. By \cite{Web} it can be seen to hold
most of the time for $\stmod(kG)$ when $G$ is a finite group.

\begin{proof} As in the proof of 
Proposition~\ref{single-object-support}, consider the image of 
$$
\Hom_\calC(-,M)\xrightarrow{\Hom_\calC(-,\alpha_M)} \Hom_\calC(-,F(M)).
$$
This has $s^M$ as a composition factor, and if it
has more composition factors than this it must have
one of the form $s^E$ where $E\to M$ is an irreducible
morphism since such simple functors form the second
radical layer of the projective cover of $s^M$. This
would mean $E$ does not have the form $\Sigma^r M$ and
that $\alpha$ has $E$ in its support, which is not
possible. We conclude that the image is the simple
functor $s^M$, and as before, $FM=\Serre(M)$ and $\alpha_M$
is an almost vanishing morphism.
\end{proof}

We now consider elements of the graded center of $\calC$
with support larger than a single $\Sigma$-orbit of objects.
One way to construct such elements is to add two elements
which have support on different shift orbits: the resulting
natural transformation $\alpha:\Id_\calC\to\Sigma^r$ 
has the property that for every morphism $f:M\to N$
between indecomposable objects in different shift orbits
we have $\alpha_Nf=0$. We consider $\alpha$ with
$\alpha_Nf\ne 0$ for some non-isomorphism $f$. This is
equivalent to requiring that the support of the natural
transformation $\Hom_\calC(-,\alpha_N)$ has size at least 2
for some $N$.

We recall that a triangulated category $\calC$ is
$d$-Calabi-Yau if $\Sigma^d$ is a Serre functor. It follows
from \cite{RvdB} that such a category has Auslander-Reiten
triangles. In the next result we refer to the
Auslander-Reiten quiver simply as the `quiver'. We recall 
that the term \textit{mesh} denotes a region of this quiver 
bounded by the objects which appear in the three left terms 
of an Auslander-Reiten triangle \cite{BG}.

\begin{theorem}
\label{finite-comp-length-theorem}
Let $\calC$ be a $k$-linear, Hom-finite, Krull-Schmidt
triangulated category. Let $\alpha:\Id_\calC\to \Sigma^r$
be a natural transformation in the graded center of $\calC$.
Fix an indecomposable object $N$ of $\calC$.
We suppose that
\begin{enumerate}
\item $\calC$ is a $d$-Calabi-Yau category for some integer $d$, 
\item for all objects $U$ in the quiver component
of $N$, $\Sigma^{r-d} U$ and $U$ lie in the same $\tau$-orbit,
\item every mesh in the quiver component of $N$ has at most
  two middle terms,  and
\item  for all objects $U$ in the same quiver component as $N$,
  the support of the natural transformation
$\Hom_\calC(-,\alpha_U): \Hom_\calC(-,U)\to \Hom_\calC(-,\Sigma^r U)$
is finite, and for some $U$ it has size at least 2.
\end{enumerate}
Then the support of $\alpha$ contains the entire quiver component of $N$.
\end{theorem}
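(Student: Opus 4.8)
The plan is to show that $\mathcal S:=(\text{support of }\alpha)\cap\Gamma$, where $\Gamma$ is the quiver component of $N$, is all of $\Gamma$, by proving $\mathcal S$ is closed under arrows and invoking connectedness of $\Gamma$. For $U\in\Gamma$ write $I_U\subseteq\Hom_\calC(-,\Sigma^rU)$ for the image of $\Hom_\calC(-,\alpha_U)$. Two easy observations organise the argument. First, $\mathcal S$ absorbs the composition factors of these images: if $\gamma\colon V\to U$ has $\alpha_U\gamma\neq0$, naturality of $\alpha$ gives $\Sigma^r(\gamma)\alpha_V=\alpha_U\gamma\neq0$, so $\alpha_V\neq0$; by Corollary~\ref{composition-factor-criterion} this says every composition factor $s^V$ of $I_U$ has $V$ in the support of $\alpha$. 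Second, $\mathcal S$ is $\tau$-stable: since $\alpha$ lies in the graded center it commutes with $\Sigma$ up to sign, so $\alpha_{\Sigma X}=0$ exactly when $\alpha_X=0$, and $\calC$ being $d$-Calabi-Yau gives $\tau=\Sigma^{d-1}$. Using the identity that the number of arrows $U\to W$ equals the number of arrows $\tau W\to U$ in an Auslander-Reiten quiver, closure of $\mathcal S$ under immediate predecessors will together with $\tau$-stability force closure under immediate successors. Since $\mathcal S\neq\varnothing$ by hypothesis~(4), it therefore suffices to prove: whenever $U\in\mathcal S$ and $h\colon E\to U$ is an arrow of $\Gamma$, then $E\in\mathcal S$.

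Next I would pin down the shape of $I_U$. As $\Sigma^d$ is a Serre functor, Proposition~\ref{prop:injective} makes $\Hom_\calC(-,\Sigma^rU)$ injective with simple socle $s^{\Serre^{-1}(\Sigma^rU)}=s^{\Sigma^{r-d}U}$, so $I_U$ has simple socle $s^{\Sigma^{r-d}U}$, and being a quotient of $\Hom_\calC(-,U)$ it has simple top $s^U$. Hypothesis~(2) puts $\Sigma^{r-d}U$ in the $\tau$-orbit of $U$ inside $\Gamma$; since $\Sigma^{r-d}$ is a quiver automorphism commuting with $\tau$ on the connected quiver $\Gamma$, a short argument shows the property ``$\Sigma^{r-d}U\cong U$'' is independent of $U\in\Gamma$, and by the size-at-least-$2$ clause of hypothesis~(4) it fails for some, hence all, $U\in\Gamma$; write $\Sigma^{r-d}=\tau^{m}$ on $\Gamma$ with $m\not\equiv0$. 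By hypothesis~(4) each $I_U$ with $U\in\mathcal S$ has finite length, so Corollary~\ref{finite-composition-length} applies, and since $s^U$ is a composition factor of $I_U$ there is a composite of irreducible morphisms $p\colon\Sigma^{r-d}U\to U$ with $\alpha_U\circ p$ the almost vanishing morphism $\Sigma^{r-d}U\to\Serre(\Sigma^{r-d}U)=\Sigma^rU$; note $p$ is nonzero and, as $\Sigma^{r-d}U\neq U$, genuinely nonempty, so it has a last arrow into $U$.

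For the predecessor step I would argue by contradiction. Suppose $U\in\mathcal S$, $h\colon E\to U$ is an arrow of $\Gamma$, and $\alpha_E=0$, so $\alpha_Uh=\Sigma^r(h)\alpha_E=0$. Write the mesh at $U$ as an Auslander-Reiten triangle $\tau U\to\bigoplus_iE_i\to U\to\Sigma\tau U$; by hypothesis~(3) there are at most two middle terms, with irreducible components $h_i\colon E_i\to U$ and $g_i\colon\tau U\to E_i$, with $h=h_1$ say, and the vanishing of consecutive maps gives the mesh relation $h_1g_1+h_2g_2=0$. Now ``peel'' the path $p\colon\Sigma^{r-d}U=Z_0\to Z_1\to\cdots\to Z_\ell=U$ from the $U$-end. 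Its last arrow $Z_{\ell-1}\to U$ is some $h_i$; it cannot be $h_1$, else $\alpha_U\circ p=(\alpha_Uh_1)\circ(\cdots)=0$; so it is $h_2$ and $Z_{\ell-1}=E_2$. Its penultimate arrow $Z_{\ell-2}\to E_2$ is one of the at most two arrows into $E_2$; it cannot be the ``bounce'' $g_2\colon\tau U\to E_2$, for then the mesh relation gives $\alpha_Uh_2g_2=-\alpha_Uh_1g_1=0$ and again $\alpha_U\circ p=0$; so $p$ is sectional at $E_2$ (if $E_2$'s only arrow in is $g_2$, we get a contradiction at once). Iterating, at each interior vertex $Z_{\ell-j}$ the bounce arrow $\tau(Z_{\ell-j+1})\to Z_{\ell-j}$ is excluded, because the mesh relation there combined with the vanishing produced at the previous step forces its composite into the already-built tail to be killed after $\alpha_U$. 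Hence $p$ is a sectional path from $\tau^{m}U$ to $U$. Passing to the universal cover $\ZZ\Delta$, $\Delta$ the tree class of $\Gamma$, a sectional path projects to a non-backtracking walk in the tree $\Delta$, so it cannot meet two distinct vertices of one $\tau$-orbit; since $m\not\equiv0$, this contradicts $Z_0=\tau^{m}U\neq U=Z_\ell$. Therefore $\alpha_E\neq0$, i.e.\ $E\in\mathcal S$, and the first paragraph completes the proof.

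The step I expect to be the main obstacle is the peeling induction of the third paragraph: carrying it out rigorously requires careful bookkeeping of which irreducible components of successive mesh maps are identified (up to scalar, using that spaces of irreducible morphisms are one-dimensional over $k$) and a uniform treatment of one- versus two-middle-term meshes, together with a clean proof of the combinatorial lemma that a sectional path in an Auslander-Reiten component never joins two distinct objects of a single $\tau$-orbit. Everything else — the socle/top description via Proposition~\ref{prop:injective}, the good-path description via Corollary~\ref{finite-composition-length}, and the $\tau$-stability coming from membership in the graded center — is routine.
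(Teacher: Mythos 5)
The proposal takes a genuinely different route from the paper. Both arguments pivot on the morphism $\phi:\Sigma^{r-d}U\to U$ (a sum of finite composites of irreducibles, with $\alpha_U\phi$ almost vanishing) provided by Corollary~\ref{finite-composition-length}, and both exploit the mesh relation under hypothesis~(3). But the paper then shows directly, by ``mesh sliding'', that every path from $\Sigma^{r-d}U$ to $U$ has composite $\pm\phi$, and uses the existence of a path running through the whole $\tau$-orbit segment from $\Sigma^{r-d}U=\tau^sU$ to $U$ and ending in an arbitrary middle-term arrow, so that $\alpha_U(\hbox{irreducible into }U)\ne0$ for every incoming arrow. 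You instead argue by contradiction: if some arrow $h\colon E\to U$ has $\alpha_Uh=0$, the mesh relations force the explicit path $p$ realizing $\phi$ to be sectional, and a sectional path cannot join two distinct objects of a $\tau$-orbit. Your ``peeling'' induction is in fact correct and tight: once the path is assumed sectional at $P_1,\dots,P_{i-1}$, the two arrows into $P_{i-1}$ are exactly $a_i$ and $b_{i-1}$, so the mesh relation at $P_{i-1}$ gives $a_ib_i=-b_{i-1}d_i$ and the vanishing propagates. This is a nice alternative mechanism.

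However, there are two genuine gaps you need to close. First, the combinatorial lemma ``a sectional path in an Auslander--Reiten component cannot meet two distinct vertices of one $\tau$-orbit'' is proved for $\ZZ\Delta$ ($\Delta$ a tree) by your universal-cover argument, but it does \emph{not} descend to quotients $\ZZ\Delta/\Pi$: the two endpoints $\Sigma^{r-d}U$ and $U$ lie in the same $\tau$-orbit of the component, but their lifts to $\ZZ\Delta$ need not lie in the same $\tau$-orbit of $\ZZ\Delta$, so the projection-to-$\Delta$ argument does not give a contradiction. Since the theorem is stated for an arbitrary $d$-Calabi--Yau triangulated category, you cannot assume the component is $\ZZ\Delta$ itself. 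The paper's route never needs this lemma: it constructs paths ending at each middle term explicitly. Second, the dismissal of the case $\Sigma^{r-d}U\cong U$ is not justified; the assertion that $\Sigma^{r-d}U\cong U$ contradicts the size-$\ge 2$ clause of hypothesis~(4) does not follow, since the image of $\Hom_\calC(-,\alpha_U)$ can have simple top and socle both $s^U$ while still having other composition factors (so support size $\ge 2$). Relatedly, the opening reduction ``it suffices to prove $\mathcal S$ is closed under predecessors'' needs a caveat: the peeling argument only applies to $U$ with $\Hom_\calC(-,\alpha_U)$ of support size $\ge 2$, so to propagate across the component you also need to record that this size-$\ge 2$ property passes to neighbours (which it does, via naturality applied to the bounce arrow $\tau U\to W$, but this should be said). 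The paper's proof is terse on this last point too, but it avoids the first two issues entirely.
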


\begin{proof}
Let $U$ be an indecomposable object in the quiver component of $N$ for 
which the support of $\Hom_\calC(-,\alpha_U)$ has size at least 2. i
Since $\Hom_\calC(-,\alpha_U)$ has finite composition length,
  by Proposition~\ref{almost-vanishing-characterization} and 
Corollary~\ref{finite-composition-length} there is a
  morphism $\phi: \Sigma^{r-d}U\to U$, which is a sum of finite composites of
  irreducible morphisms, such that $\alpha_U\phi$ is almost vanishing.
  Because the support of $\Hom_\calC(-,\alpha_U)$ has size at least 2,
  $\phi$ is not an isomorphism.

  We claim that the composite of morphisms in any path in the
  Auslander-Reiten quiver from $\Sigma^{r-d}U$ to $U$ also has the
  same property as $\phi$, and in fact equals $\pm\phi$. This is
  because whenever we have a pair of consecutive irreducible
  morphisms in such a path of the form $\tau V \to W\to V$ the
  Auslander-Reiten triangle $\tau V\to E\to V\to \Sigma\tau V$
  has middle term $E$ with at most two indecomposable summands,
  one of which is $W$. If $E=W\oplus X$ for some $X$ we can
  replace the maps into and out of $W$ by irreducible morphisms
  $\tau V\to X\to V$, because the composite
  $\tau V\to W\oplus X\to V$ is zero, so that the new irreducible 
morphisms have composite $(-1)$ times the composite of the old. 
Repeating this operation
allows us to move from any path from $\Sigma^{r-d}U$ to $U$
to any other path, changing the composite by $(-1)$ each time. 
Now $\phi$ must be a linear combination of composites along these 
paths, but since the composites are all the same up to sign we 
deduce that $\phi$ could be taken to be the composite of the 
irreducibles along any of the paths.

Since $\Sigma^{r-d}U$ and $U$ lie in the same $\tau$-orbit, there 
is a path in the quiver from $\Sigma^{r-d}U$ to $U$ going through 
each member of the $\tau$-orbit of $U$ between these two objects. 
We deduce that for every irreducible morphism with codomain $U$ the 
domain of that morphism lies in the support of $\alpha$. This and the fact
  that $\alpha$ commutes (up to sign) with $\Sigma$, and hence with $\tau$,
  implies that all objects in the component of $N$ lie in
  the support of $\alpha$.
\end{proof}

In the next section we present an example of a natural
transformation satisfying the conditions of
Theorem~\ref{finite-comp-length-theorem} in the context
of the stable module category of a group with a dihedral 
Sylow 2-subgroup in characteristic 2.
In general it is not always possible to find such examples, as we now see. 

\begin{corollary}
\label{A-corollary}
With the same hypotheses as in
Theorem~\ref{finite-comp-length-theorem}, suppose further that
the quiver component containing $N$ has type $A_\infty$.
Then no such natural transformation $\alpha$ can exist.
\end{corollary}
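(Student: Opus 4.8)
The plan is to derive a contradiction by showing that the hypotheses of Theorem~\ref{finite-comp-length-theorem} cannot all hold simultaneously when the quiver component of $N$ has tree class $A_\infty$. The key is that in such a component the Auslander-Reiten quiver is particularly rigid: every mesh has exactly \emph{one} middle term (not two), and so the argument in the proof of Theorem~\ref{finite-comp-length-theorem} forces any path $\Sigma^{r-d}U\to U$ along irreducible morphisms to have composite equal, up to sign, to the fixed map $\phi$ with $\alpha_U\phi$ almost vanishing. First I would invoke the theorem's conclusion: under hypotheses (1)--(4), the support of $\alpha$ is the entire component of $N$. But now I claim hypothesis (4) --- that some $\Hom_\calC(-,\alpha_U)$ has support of size at least $2$ --- is incompatible with the $A_\infty$ structure, which will be the contradiction.

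The main step is to examine what a morphism $\phi:\Sigma^{r-d}U\to U$ that is a finite composite of irreducible morphisms can look like in an $A_\infty$ component. By hypothesis (2), $\Sigma^{r-d}U$ and $U$ lie in the same $\tau$-orbit, so $\Sigma^{r-d}U\cong\tau^s U$ for some integer $s$. Since the component is $A_\infty$, there is (up to sign) a unique path of irreducible morphisms of minimal length between two objects on the same $\tau$-orbit, obtained by walking along the "boundary" of the component --- but in an $A_\infty$ component that walk must pass through objects \emph{not} on the $\tau$-orbit of $U$ in a controlled way, and more to the point, by Corollary~\ref{finite-composition-length}(5) every non-isomorphism $\gamma:\Sigma^r U\to U$ factors as $\phi=\gamma\xi$. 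If $\phi$ is a nonzero composite of irreducibles and is \emph{not} an isomorphism, then by walking the path one shows that the intermediate objects forced into the support of $\alpha$ would include objects with irreducible maps $\Sigma^t M\to M$ for various $t$ --- but in an $A_\infty$ component no object admits an irreducible self-$\Sigma^t$-map, so actually the cleanest route is: the only composites of irreducibles from $\tau^s U$ to $U$ that can be almost vanishing (hence nonzero on the socle) are the ones of length exactly $2s'$ around a single mesh-chain, and in $A_\infty$ each such composite already vanishes unless $s=1$ and the path is the mesh map $\tau U\to E\to U$, in which case $\phi$ \emph{is} essentially the unique such composite and the image $\Hom_\calC(-,\alpha_U)$ is forced to be simple, contradicting size $\ge 2$.

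Concretely, the argument I would write is: apply Theorem~\ref{finite-comp-length-theorem} to get a morphism $\phi:\Sigma^{r-d}U\to U$, a sum of composites of irreducibles, with $\alpha_U\phi$ almost vanishing and $\phi$ not an isomorphism. By the sign-normalization argument in that theorem's proof, $\phi$ can be taken to be \emph{the} composite of irreducibles along any path from $\Sigma^{r-d}U$ to $U$. In an $A_\infty$ component every mesh has a single middle term, so all such paths between $\Sigma^{r-d}U$ and $U$ have the same length and their composites agree up to sign with $\phi$; but any two such paths that are not identical differ by "going around" a mesh, and in $A_\infty$ there is nothing to go around, so essentially there is one path. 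If that path has positive length, the composite of an AR-mesh chain in $A_\infty$ is zero (a consequence of the mesh relations: the composite $\tau V\to E\to V$ is zero, and with a single middle term this propagates), forcing $\phi=0$ and contradicting $\alpha_U\phi\ne 0$. Hence $\phi$ must be an isomorphism, contradicting the conclusion that $\phi$ is not an isomorphism when the support of $\Hom_\calC(-,\alpha_U)$ has size $\ge 2$. Therefore no such $\alpha$ exists.

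The step I expect to be the main obstacle is making precise the claim that in an $A_\infty$ component the only composites of irreducible morphisms $\tau^s U\to U$ along quiver paths are, up to scalar, either an isomorphism (when $s=0$) or zero (when $s>0$) --- i.e., ruling out a nonzero non-isomorphism $\phi$ of this form. This requires a careful use of the mesh relations in an $A_\infty$ tube or $\ZZ A_\infty$ component, and I would either cite the relevant structure of $A_\infty$ components (the shape of paths and the vanishing of long mesh composites) from Auslander-Reiten theory or, if $\calC$ is the stable category of a group algebra, from the known description of $\Omega$-periodicity and AR-components there.
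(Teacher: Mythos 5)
Your proposal takes the right general direction --- use the mesh relations to show the map $\phi:\Sigma^{r-d}U\to U$ with $\alpha_U\phi$ almost vanishing must be zero --- but it rests on a false structural premise. You assert repeatedly that ``in an $A_\infty$ component every mesh has a single middle term.'' That is incorrect: in a component of tree class $A_\infty$ (a $\ZZ A_\infty$ or a tube $\ZZ A_\infty/\tau^n$) only the meshes ending at \emph{rim} objects have a single middle term, while every interior mesh has exactly two. Consequently your claim that ``the only composites of irreducible morphisms $\tau^s U\to U$ along quiver paths are, up to scalar, isomorphisms or zero'' fails for interior $U$: the mesh relation at an interior vertex forces the two diagonal composites $\tau U\to E_1\to U$ and $\tau U\to E_2\to U$ to be negatives of one another, but does not force either one to vanish, and in general they do not (this is exactly why $\Rad^2_{\calC}(-,U)/\Rad^3_{\calC}(-,U)$ can be non-zero). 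Your conclusion $\phi=0$ therefore does not follow when the $U$ supplied by hypothesis~(4) of Theorem~\ref{finite-comp-length-theorem} is an interior object, which is all you have at that stage of the argument.

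The decisive step you are missing is to exploit the rim, and this is exactly what the paper's proof does. Having invoked the theorem to conclude that the support of $\alpha$ is the entire component, one may test $\alpha$ at \emph{any} object of that component, and the paper chooses an object $N_0$ on the rim. For a rim $N_0$ the mesh $\tau N_0\to E\to N_0$ genuinely has a single middle term, so its composite is zero; the sign-deformation argument you correctly describe then re-routes any path of irreducibles from $\Sigma^{r-d}N_0$ to $N_0$ (both on the rim) through such a vanishing rim-mesh composite, forcing the composite to vanish unless $r=d$. When $r=d$, the only finite path from $N_0$ to itself is empty, so the support of $\Hom_\calC(-,\alpha_{N_0})$ has size~1, making $\alpha_{N_0}$ almost vanishing and contradicting the existence, supplied by the theorem's proof, of an irreducible $f$ with $\alpha_{N_0}f\ne 0$. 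Without the move to a rim object, your argument does not close.
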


\begin{proof} Suppose there were such a natural transformation $\alpha$.
Its support would have to contain the quiver component containing $N$, 
and for any choice of indecomposable object $N_0$ in this quiver 
component the proof of Theorem~\ref{finite-comp-length-theorem} shows 
that there is an irreducible morphism $f:U\to N_0$ with 
$\alpha_{N_0}f\ne 0$. We may choose $N_0$ so that it lies on 
the rim of the quiver, as in the following diagram. 
{
\def\tempbaselines
{\baselineskip16pt\lineskip3pt
   \lineskiplimit3pt}
\def\diagram#1{\null\,\vcenter{\tempbaselines
\mathsurround=0pt
    \ialign{\hfil$##$\hfil&&\quad\hfil$##$\hfil\crcr
      \mathstrut\crcr\noalign{\kern-\baselineskip}
  #1\crcr\mathstrut\crcr\noalign{\kern-\baselineskip}}}\,}

\def\clap#1{\hbox to 0pt{\hss$#1$\hss}}
$$
\diagram{&\clap{\vdots}&&&&\clap{\vdots}&&&&\clap{\vdots}&&&&\clap{\vdots}\cr
&&\searrow&&\nearrow&&\searrow&&\nearrow&&\searrow&&\nearrow&\cr
\cdots&&&\clap{M_2}&&&&\clap{N_2}&&&&\clap{O_2}&&&\cdots\cr
&&\nearrow&&\searrow&&\nearrow&&\searrow&&\nearrow&&\searrow&\cr
&\clap{L_1}&&&&\clap{M_1}&&&&\clap{N_1}&&&&\clap{O_1}\cr
&&\searrow&&\nearrow&&\searrow&&\nearrow&&\searrow&&\nearrow&\cr
&&&\clap{L_0}&&&&\clap{M_0}&&&&\clap{N_0}\cr
}
$$
}
There is no path of irreducible morphisms from $\Sigma^{r-d} N_0$
to $N_0$ with non-zero composite unless $r=d$. This is because 
$\Sigma^{r-d} N_0$ is also on the rim, and such a
path has composite equal to that of a path which has two
irreducible morphisms between consecutive objects on the
rim, and the composition of these morphisms is zero. Such a 
path was necessary to the existence of $\alpha$ in the proof 
of Theorem~\ref{finite-comp-length-theorem}, so this situation 
cannot occur. When $r=d$  the support
of $\Hom_\calC(-,\alpha_{N_0})$ has size 1, because there is no
finite chain of irreducible morphisms from $N_0$ to $N_0$ other
than the empty chain at $N_0$. This shows that $\alpha_{N_0}$ is almost
vanishing, so that $\alpha_{N_0}f=0$, a contradiction. Hence i
no  $\alpha$ can exist as in Theorem~\ref{finite-comp-length-theorem}.
\end{proof}

\begin{corollary}
  Let $\calC=\stmod(B)$ be the stable module category of a
block with wild representation type of a group algebra $kG$. 
Let $\alpha$ be an element of the graded center of $\calC$.
\begin{enumerate}
\item If $\alpha$ is supported on only finitely many 
$\tau$-orbits then $\alpha$ is a sum of elements which are supported on 
  single $\tau$-orbits, each of which is of the kind described 
in Proposition~\ref{single-sigma-orbit}. Thus $\alpha_Yf=0$ 
for every non-isomorphism $f:X\to Y$ between indecomposable objects.
\item If there is any non-isomorphism $f:X\to Y$ between 
indecomposable objects so that $\alpha_Yf\ne0$ then such 
an $f$ can be found which  is not a finite composite of irreducible 
morphisms. In this case $\alpha$ is not supported on only finitely 
many $\tau$-orbits.
\end{enumerate}
\end{corollary}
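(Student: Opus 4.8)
The plan is to derive both parts from the structural results already established, by verifying that the hypotheses of Theorem~\ref{finite-comp-length-theorem} and Corollary~\ref{A-corollary} are met in the wild-block setting. The key geometric input is the classification of Auslander-Reiten components for wild blocks: by work of Erdmann and others, every component of the stable Auslander-Reiten quiver of a wild block $B$ has tree class $A_\infty$, and in particular every mesh has at most two middle terms while no indecomposable $M$ admits an irreducible map $\Sigma^r M\to M$. Moreover $\stmod(B)$ is $1$-Calabi-Yau (since $\Serre=\Omega=\Sigma^{-1}$), so condition (1) of the theorem holds with $d=-1$, and condition (2) holds automatically because $\Sigma^{r-d}U$ and $U$ always lie in the same $\tau$-orbit (indeed $\tau=\Sigma^{-1}\Serre=\Sigma^{-2}$, so every $\Sigma$-power of $U$ is in its $\tau$-orbit). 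Condition (3) is the two-middle-terms property just recalled.

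For part (1): suppose $\alpha$ is supported on only finitely many $\tau$-orbits. First I would argue that the support of $\Hom_\calC(-,\alpha_U)$ is finite for every indecomposable $U$. Indeed, by Corollary~\ref{finite-composition-length} this support consists of objects $V$ admitting a chain of irreducible morphisms to $U$ whose composite does not die under $\alpha_U$; any such $V$ has a nonzero map into $U$ composing nontrivially with $\alpha_U$, hence (by naturality, as in the proof of Proposition~\ref{single-object-support}) lies in the support of $\alpha$, so $V$ is among the finitely many $\tau$-orbits, and within an $A_\infty$ component each $\tau$-orbit meets only finitely many of the objects at bounded distance from $U$ along irreducible paths — but in fact one argues more simply: if any $\Hom_\calC(-,\alpha_U)$ had infinite support then, since $\alpha$ commutes with $\Sigma$ (hence $\tau$), its whole component would lie in the support, and by Theorem~\ref{finite-comp-length-theorem} applied with that $U$ (all hypotheses now checked) this is consistent only if... — the cleaner route is: if $\Hom_\calC(-,\alpha_U)$ ever had size $\ge 2$, hypotheses (1)--(4) of Theorem~\ref{finite-comp-length-theorem} would be satisfied for $N=U$, forcing the entire component into the support, and then Corollary~\ref{A-corollary} (whose $A_\infty$ hypothesis holds) yields a contradiction. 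Hence every $\Hom_\calC(-,\alpha_U)$ has support of size $\le 1$, so by Proposition~\ref{almost-vanishing-characterization} each $\alpha_U$ is either zero or almost vanishing; then Proposition~\ref{single-sigma-orbit} (applicable since no irreducible map $\Sigma^r U\to U$ exists) shows that the restriction of $\alpha$ to each $\tau$-orbit in its support is one of Linckelmann's transformations, and summing over the finitely many orbits gives the claim; the assertion $\alpha_Y f=0$ for non-isomorphisms $f$ between indecomposables in distinct orbits is immediate, and within a single orbit it follows because an almost vanishing morphism $\alpha_Y$ kills every non-split epimorphism (Proposition~\ref{almost-vanishing-characterization}(4)), while $f\colon\Sigma^r Y\to Y$ cannot be an isomorphism.

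For part (2): this is the contrapositive of part (1). If some non-isomorphism $f\colon X\to Y$ between indecomposables has $\alpha_Y f\ne 0$, then by part (1) $\alpha$ cannot be supported on finitely many $\tau$-orbits. It remains to produce such an $f$ that is not a finite composite of irreducible morphisms. Here I would invoke Corollary~\ref{finite-composition-length}, condition (5), in contrapositive form: since $\alpha_Y f \ne 0$ witnesses that $\Hom_\calC(-,\alpha_Y)$ has support of size $\ge 2$ (as $f$ is not an isomorphism, $s^X$ and $s^{\Serre^{-1}Y}$ are distinct composition factors), we must be in a situation where $\Hom_\calC(-,\alpha_Y)$ has \emph{infinite} composition length — for if it were finite, Theorem~\ref{finite-comp-length-theorem} plus Corollary~\ref{A-corollary} would again give a contradiction. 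Infinite composition length means, via Corollary~\ref{finite-composition-length}, that there are infinitely many isomorphism classes of $V$ with a morphism $\gamma\colon V\to X$ such that $f\gamma\ne 0$; in particular the radical filtration argument in the proof of $(1)\Leftrightarrow(5)$ there shows $\Rad_\calC^n(-,X)$ is never contained in the kernel of $\Hom_\calC(-,f)$, so one can choose $\gamma$ lying in arbitrarily high radical powers, and then $f\gamma\colon V\to Y$ is a non-isomorphism with $\alpha_Y(f\gamma)\ne 0$ which is not a finite composite of irreducible morphisms (it lies in $\Rad_\calC^n$ for all $n$). Taking this $f\gamma$ in place of $f$ proves the claim.

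The main obstacle I anticipate is the repeated appeal to ``the wild-block component classification'': one must cite precisely that, for a wild block $B$ of $kG$, every component of the stable AR quiver is of type $A_\infty$ (so that Corollary~\ref{A-corollary} bites), and one should be careful that $\stmod(B)$ genuinely satisfies the Calabi-Yau and same-$\tau$-orbit bookkeeping with the sign-twisted shift; the rest is assembling the previously proved propositions. A secondary subtlety is ensuring in part (2) that the morphism extracted from high radical powers is itself nonzero after composing with $f$ and with $\alpha_Y$ — this is exactly what the failure of (5) in Corollary~\ref{finite-composition-length} guarantees, so no new work is needed beyond quoting it carefully.
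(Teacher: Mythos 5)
Your overall strategy matches the paper's: cite Erdmann for the $A_\infty$ classification of wild block components, verify the hypotheses of Theorem~\ref{finite-comp-length-theorem}, appeal to Corollary~\ref{A-corollary} for part (1), and use Corollary~\ref{finite-composition-length} (specifically the equivalence with condition (5) there) for part (2). The paper's own proof is considerably terser on these points, but the logical skeleton is the same.

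There is, however, a genuine error in your verification of condition (2) of Theorem~\ref{finite-comp-length-theorem}. You write that ``$\tau=\Sigma^{-1}\Serre=\Sigma^{-2}$, so every $\Sigma$-power of $U$ is in its $\tau$-orbit.'' This is false: from $\tau=\Sigma^{-2}$ the $\tau$-orbit of $U$ is precisely $\{\Sigma^{2n}U : n\in\ZZ\}$, the \emph{even} $\Sigma$-powers only. Thus $\Sigma^{r-d}U = \Sigma^{r+1}U$ lies in the $\tau$-orbit of $U$ exactly when $r+1$ is even, i.e. when $r$ is odd, unless $U$ happens to be $\Sigma$-periodic. (You also call the category $1$-Calabi-Yau while correctly using $d=-1$; it is $(-1)$-Calabi-Yau, so this is just a typo.) In a $\ZZ A_\infty$ component $\Sigma$ cannot act as a power of $\tau$, since that would force $2m=-1$ from $\Sigma^2=\tau^{-1}$; consequently for $r$ even $\Sigma^{r+1}U$ does not even lie in the same component as $U$. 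Your claim that condition (2) ``holds automatically'' therefore does not follow from the identity $\tau=\Sigma^{-2}$; the paper simply asserts the hypotheses via Erdmann without providing the reasoning you attempt. Separately, your argument for part (1) contains several false starts before settling on the route the paper takes, and the ``cleaner route'' you land on still requires condition (4) of Theorem~\ref{finite-comp-length-theorem}---finiteness of the support of $\Hom_\calC(-,\alpha_U)$ for all $U$ in the component---which neither your first attempt nor the paper's one-line assertion fully establishes from the hypothesis that $\alpha$ is supported on finitely many $\tau$-orbits, since a single $\tau$-orbit contains infinitely many objects. These gaps are worth flagging when you fill in the details.
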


\begin{proof}
  We exploit the fact, using a theorem of Erdmann \cite{Erd},
  that all quiver components of $\calC$ have type $A_\infty$ and satisfy
  conditions (1), (2) and (3) of
  Theorem~\ref{finite-comp-length-theorem}.  
  
  To prove (1), if $\alpha$ were supported on only finitely many 
$\tau$-orbits then the support of $\Hom_\calC(-,\alpha_Y)$ would 
be finite for all indecomposable $Y$ and by Corollary ~\ref{A-corollary} 
such $\alpha$ cannot exist unless this support has size 1 for 
every $Y$. This is equivalent to requiring that $\alpha_Yf=0$ 
for every non-isomorphism $f:X\to Y$ between indecomposable objects, 
and that $\alpha$ is a sum of natural transformations supported 
on single $\tau$-orbits.
  
With the hypothesis of (2), we must have that some
  $\Hom_\calC(-,\alpha_Y)$ has infinite support.
Finding $f:X\to Y$ so that $\alpha_Yf$ is almost vanishing as 
in Proposition~\ref{almost-vanishing-characterization}, 
we find by Corollary~\ref{finite-composition-length} that $f$ is 
not a composite of irreducible morphisms.
\end{proof}

\begin{remark} \label{rem:finite-compose}
In the case of modules in a block of wild type in a group algebra, it 
seems likely that any map $f: X \to Y$ as above, 
that is not a composite of a finite
number of irreducible maps, should factor through a module that is not
in the quiver component of $X$, implying that $\alpha$ would have 
support on more than one quiver component. This is easily verified in some 
specific cases, but seems difficult to prove in general. 
\end{remark}

\section{An example: groups with dihedral Sylow 2-subgroups}

In this section we show, under certain circumstances, that there exist 
natural transformations in the graded center 
of the stable module category that are supported
on only a single component of the Auslander-Reiten quiver,
and which are not sums of the natural transformations constructed
by Linckelmann in \cite{Lin}. Furthermore, our natural
transformations satisfy the finiteness condition of
Theorem~\ref{finite-comp-length-theorem}. 

We assume throughout that $k$ is an algebraically closed field
of characteristic $2$ and that $G$ is a finite group 
with a dihedral Sylow 2-subgroup having order at least 8. The
group algebra $kG$  has tame
representation type, and a primary fact in the example is that the 
Auslander-Reiten quiver component which contains the trivial module has
tree class $A_\infty^\infty$  \cite{Web} and consists entirely
of endotrivial modules
(see \cite{AC}). By definition,
a $kG$-module $M$ is endotrivial provided $\Hom_{k}(M, M) \cong k \oplus P$ 
where $P$ is a projective $kG$-module. We note that a $kG$-module is
endotrivial if and only if its restriction to every elementary abelian
$p$-subgroup is endotrivial and that the tensor product of two endotrivial
modules is again an endotrivial module (See \cite{CT}).

Suppose that $S$ is a Sylow 2-subgroup of 
$G$ and that $E_1$ and $E_2$ are representatives of the two conjugacy 
classes of elementary abelian subgroups of order 4 in $S$.  
The Auslander-Reiten quiver 
containing the trivial $kG$-module has the form 
\[
\xymatrix@+.8pc@C-.9pc{
\dots \ar[dr] && U_{4,0} \ar[dr]_{\gamma_{(4,0)}^\prime}  && 
U_{2,-2} \ar[dr]_{\gamma_{(2,-2)}^\prime} && \dots \\
& U_{4,2} \ar[ur]^{\gamma_{(4,2)}} \ar[dr]_{\gamma_{(4,2)}^\prime} && 
U_{2,0} \ar[ur]^{\gamma_{(2,0)}} \ar[dr]_{\gamma_{(2,0)}^\prime} &&
U_{0,-2} \ar[dr]_{\gamma_{(0,-2)}^\prime} \ar[ur] && \\
\dots \ar[ur] \ar[dr] && U_{2,2} \ar[ur]^{\gamma_{(2,2)}} 
\ar[dr]_{\gamma_{(2,2)}^\prime} && 
U_{0,0} \ar[ur]^{\gamma_{(0,0)}} \ar[dr]_{\gamma_{(0,0)}^\prime} && 
U_{-2,-2} && \\
& U_{2,4} \ar[ur]^{\gamma_{(2,4)}} \ar[dr]_{\gamma_{(2,4)}^\prime} && 
U_{0,2} \ar[ur]^{\gamma_{(0,2)}} \ar[dr]_{\gamma_{(0,2)}^\prime} &&
U_{-2,0} \ar[ur]^{\gamma_{(-2,0)}} \ar[dr] && \\
\dots \ar[ur] && U_{0,4} \ar[ur]^{\gamma_{(0,4)}} && 
U_{-2,2} \ar[ur]^{\gamma_{(-2,2)}} && \dots \\
}
\]
where $U_{0,0} \cong k$, $U_{i,i} \cong \Omega^i(k)$ and 
$U_{i,j}$ is an endotrivial module with the property that 
$U_{i,j}\downarrow_{E_1} \cong \Omega^i(k_{E_1})$ and
$U_{i,j}\downarrow_{E_2} \cong \Omega^j(k_{E_2})$ (See \cite{AC, Web}).
The almost split sequence ending in the trivial module has the 
form 
\[
\xymatrix{
0 \ar[r] & \Omega^2(k) \ar[r] & U_{2,0} \oplus U_{0,2} \ar[r] 
& k \ar[r] & 0
}
\]
Note that all modules in the component of the trivial module have
odd dimension since they are endotrivial. 
If $M$ is an indecomposable $kG$-module of odd 
dimension, then by \cite{AC} the almost split sequence ending
in $M$ is (modulo 
projective summands) 
\[
\xymatrix{
0 \ar[r] & \Omega^2(k) \otimes M \ar[r] & U_{2,0} \ \otimes M \ \oplus 
U_{0,2} \otimes M \ar[r] & M \ar[r] & 0
}
\]
In particular, we see that $U_{i,j} \otimes U_{s,t} \cong 
U_{i+s, j+t} \oplus P$ for some projective module $P$. 

\begin{lemma} \label{lemma:exact}
For any $n$ there is an exact sequence having the form
\[
\xymatrix{
\calE_n:  
& 0 \ar[r] & \Omega^{2n}(k) \ar[rr]^{
\begin{pmatrix} \alpha, \beta \end{pmatrix} \quad} 
&& U_{2n,0} \oplus U_{0,2n} 
\ar[r]^{\qquad \begin{pmatrix} \gamma \\ \delta\end{pmatrix}} 
& k \ar[r] & 0
}
\]
where $\alpha = \gamma_{(2n,2)} \dots  \gamma_{(2n,2n-2)}  
\gamma_{(2n,2n)}$, 
$\beta = \gamma_{(2,2n)}^\prime \dots 
\gamma_{(2n-2,2n)}^\prime   \gamma_{(2n,2n)}^\prime$, etc. 
That is, each map is the obvious composition of irreducible maps in 
the Auslander-Reiten quiver. 
\end{lemma}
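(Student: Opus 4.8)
The plan is to argue by induction on $n$. For $n=1$ the sequence $\calE_1$ is exactly the almost split sequence ending in $k=U_{0,0}$ displayed above the lemma: $\Omega^2(k)=U_{2,2}$, the middle term is $U_{2,0}\oplus U_{0,2}$, and the four maps are the irreducible maps $\gamma_{(2,2)},\gamma_{(2,2)}',\gamma_{(2,0)}',\gamma_{(0,2)}$ adjacent to $U_{2,2}$ and to $k$.

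For the inductive step, write $q_n\colon U_{2n,0}\oplus U_{0,2n}\to k$ for the surjection of $\calE_n$, let $\Psi=f\oplus g$ where $f\colon U_{2n+2,0}\to U_{2n,0}$ and $g\colon U_{0,2n+2}\to U_{0,2n}$ are the relevant irreducible maps, and put $q_{n+1}=q_n\Psi$; its components are $\gamma_n f$ and $\delta_n g$, which are precisely the composites of irreducible maps prescribed for the right-hand map of $\calE_{n+1}$. The sequence $\calE_{n+1}$ will then be $0\to\ker q_{n+1}\to U_{2n+2,0}\oplus U_{0,2n+2}\xrightarrow{q_{n+1}}k\to 0$. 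That this, with the prescribed left map $(\alpha_{n+1},\beta_{n+1})\colon\Omega^{2n+2}(k)\to U_{2n+2,0}\oplus U_{0,2n+2}$, is a complex is a consequence of the mesh relations: the composites $\gamma_{n+1}\alpha_{n+1}$ and $\delta_{n+1}\beta_{n+1}$ run along two monotone paths in the component from $U_{2n+2,2n+2}$ to $k$, any two such paths are joined by elementary moves across meshes each changing the composite only by a sign, and since $\operatorname{char} k=2$ this makes the two composites equal, so their sum is $0$.

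The analytic input I would obtain by restriction to the two elementary abelian subgroups $E_1,E_2$ of order $4$. Over $E_1$ every module of the form $U_{2j,m}$ is, modulo projectives, the syzygy $\Omega^{2j}(k_{E_1})$, whose stable endomorphism ring is one-dimensional; writing each irreducible map between consecutive such modules as a tensor translate of an irreducible map of $\calE_1$, and noting that the latter restricts to a nonzero stable map (otherwise a trivial module would split off a nonisomorphic syzygy in $\calE_1\downarrow_{E_1}$), one finds that $\gamma_{n+1}$, and likewise $\alpha_{n+1}$, restricts over $E_1$ to a nonzero scalar, while $\delta_{n+1}$, $\beta_{n+1}$ do the same over $E_2$. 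In particular all four composites are nonzero, so $q_{n+1}\ne 0$, hence onto (the trivial module being simple), making $\calE_{n+1}$ short exact; moreover $\ker q_{n+1}$ restricts, modulo projectives, to $\Omega^{2n+2}(k_{E_1})$ over $E_1$ and to $\Omega^{2n+2}(k_{E_2})$ over $E_2$. As every elementary abelian $2$-subgroup of a dihedral Sylow subgroup lies in a conjugate of $E_1$ or $E_2$, $\ker q_{n+1}$ is endotrivial, and by the structure of this component of the Auslander--Reiten quiver \cite{AC} it is $\Omega^{2n+2}(k)$ up to a projective summand; being a submodule of $U_{2n+2,0}\oplus U_{0,2n+2}$, which has no projective summand, it has none, so $\ker q_{n+1}\cong\Omega^{2n+2}(k)$.

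The step I expect to be the real obstacle is checking that the prescribed map $(\alpha_{n+1},\beta_{n+1})$ is an isomorphism onto $\ker q_{n+1}$ --- equivalently, that $\calE_{n+1}$ is exact in the middle with the stated left-hand map. Here I would use that, over $E_1$, the component $\alpha_{n+1}\downarrow_{E_1}$ is a nonzero element of the one-dimensional ring $\underline{\End}_{kE_1}(\Omega^{2n+2}k)$, hence a stable isomorphism, so $(\alpha_{n+1},\beta_{n+1})\downarrow_{E_1}$ is a stably split monomorphism into a module stably isomorphic to $\Omega^{2n+2}(k_{E_1})$, and hence (as $\stmod(kE_1)$ is Krull--Schmidt) itself a stable isomorphism; likewise over $E_2$, using $\beta_{n+1}$. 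Composing with a chosen isomorphism $\ker q_{n+1}\cong\Omega^{2n+2}(k)$ produces an endomorphism $\phi$ of $\Omega^{2n+2}(k)$ whose restrictions to $E_1$ and $E_2$ are stable isomorphisms, so $\mathrm{cone}(\phi)$ is projective on restriction to $E_1$ and to $E_2$ and hence, by Chouinard's theorem, projective over $kG$; thus $\phi$ is a stable isomorphism and, being an endomorphism of the indecomposable nonprojective module $\Omega^{2n+2}(k)$, an honest isomorphism. Therefore $(\alpha_{n+1},\beta_{n+1})$ is an isomorphism onto $\ker q_{n+1}$, $\calE_{n+1}$ is exact with the required maps, and the induction is complete.
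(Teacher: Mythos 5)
Your proof takes a genuinely different route from the paper's. The paper establishes the lemma by functorial methods: it computes the composition factors of $\Rad^m_{kG}(-,M)/\Rad^{m+1}_{kG}(-,M)$ for modules $M$ in an $A_\infty^\infty$ component by a recurrence built from almost split sequences, deduces that every composite of irreducible morphisms induces a monomorphism of $\Rad^\infty$-quotients, and then identifies the kernel by matching composition factors of representable functors. You instead induct on $n$, use restriction to the two conjugacy classes $E_1,E_2$ of elementary abelian subgroups, and invoke Chouinard's theorem. This is an attractive alternative in principle, but as written it has several gaps in the restriction bookkeeping.

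First, the parenthetical justification that each irreducible map in $\calE_1$ restricts to a stably nonzero morphism ``(otherwise a trivial module would split off a nonisomorphic syzygy in $\calE_1\downarrow_{E_1}$)'' does not go through: if $\gamma'_{(2,0)}\downarrow_{E_1}$ factored through a projective one can absorb the $k_{E_1}$ summand on both sides of $\calE_1\downarrow_{E_1}$ without producing a contradiction. The needed nonvanishing really requires an independent argument. Second, and more seriously, the bookkeeping for which restriction is a ``scalar'' is off. Writing $U_{i,j}\downarrow_{E_1}\cong\Omega^i(k_{E_1})\oplus(\mathrm{proj})$, the map $\alpha_{n+1}\downarrow_{E_1}$ is indeed an endomorphism of $\Omega^{2n+2}(k_{E_1})$ (a product of scalars), but $\gamma_{n+1}\downarrow_{E_1}$ is a stable map $\Omega^{2n+2}(k_{E_1})\to k_{E_1}$ (a product of degree-$2$ classes in $\widehat{\mathrm{H}}^*(E_1,k)$, not an element of a one-dimensional ring); the claim that ``$\gamma_{n+1}$ restricts over $E_1$ to a nonzero scalar'' is therefore not meaningful as stated. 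To conclude that such a product is nonzero you would additionally need that $H^*(E_1,k)$ is a domain, which you do not say. Third, the identification $\ker q_{n+1}\downarrow_{E_i}\cong\Omega^{2n+2}(k_{E_i})\oplus(\mathrm{proj})$ hinges on the \emph{off-diagonal} component $\delta_{n+1}\downarrow_{E_1}\colon k_{E_1}\to k_{E_1}$ (and symmetrically $\gamma_{n+1}\downarrow_{E_2}$) being nonzero. If that scalar vanishes, $k_{E_1}$ becomes a direct summand of $\ker q_{n+1}\downarrow_{E_1}$ and the restriction type you need fails; you never address this case. The Chouinard step in the last paragraph is a clean idea and would close the argument once these three points are repaired, but at present they are genuine gaps rather than details. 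By contrast, the paper's $\Rad^\infty$ argument sidesteps all questions of restriction by working entirely with composition factors of functors, at the cost of being more technical.
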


\begin{proof}
The modules in the sequence are positioned in the Auslander-Reiten 
quiver as the vertices of a diamond. By an argument similar to the 
one used to prove Theorem~\ref{finite-comp-length-theorem} we see 
that the two composites of irreducible morphisms from 
$\Omega^{2n}(k)$ to $k$, obtained by going round the two sides 
of the diamond, are equal of opposite sign. This shows that the 
composite of the two middle morphisms in the sequence is zero. 
We will show that $U_{2n,0}\oplus U_{0,2n}\to k$ is  
surjective and that the left side of the sequence 
is the kernel of this surjection. In what follows, we write
$\Rad^n_{kG}$ for the $n^{th}$ radical of $\Hom_{kG}$.
 
We use functorial methods to establish  this. 
We claim that for any module $M$ in a stable component of the 
Auslander-Reiten quiver of ${kG}$-modules of type 
$A_\infty^\infty$, the composition factors of 
$\Hom_{kG}(-,M)/\Rad^\infty_{kG}(-,M)$ are the $s^V$ for 
which there is a path of irreducible morphisms from $V$ to $M$. 
More specifically, the composition factors of 
$\Rad_{kG}^n(-,M)/\Rad_{kG}^{n+1}(-,M)$ are the $s^V$ for which 
there is a path of $n$ irreducible morphisms $V$ to $M$, each $s^V$  
taken with multiplicity 1. This may be proved by considering the 
projective resolutions of simple functors, such as
$$
\begin{aligned}
0\to\Hom_{kG}(-,\tau M)\to\Hom_{kG}(-, L_1)&\oplus\Hom_{kG}(-,L_2)\cr
&\to\Hom_{kG}(-, M)\to s^M\to 0\cr
\end{aligned}
$$
where $0\to\tau M\to L_1\oplus L_2\to M\to 0$ is an almost split sequence.
For each $n\ge 2$ this restricts to an exact sequence
$$
\begin{aligned}
0\to\Rad^{n-2}_{kG}(-,\tau M)\to\Rad^{n-1}_{kG}(-, L_1)&\oplus
\Rad^{n-1}_{kG}(-,L_2)\cr
&\to\Rad^{n}_{kG}(-, M)\to 0\cr
\end{aligned}
$$
since the morphisms are obtained by composition with an irreducible 
morphism. Hence we obtain for each $n\ge 1$ an exact sequence
$$
\begin{aligned}
0&\to\Rad^{n-2}_{kG}(-,\tau M)/\Rad^{n-1}_{kG}(-,\tau M)\cr
&\to\Rad^{n-1}_{kG}(-, L_1)/\Rad^{n}_{kG}(-, L_1)
\oplus\Rad^{n-1}_{kG}(-,L_2)/\Rad^{n}_{kG}(-,L_2)\cr
&\to\Rad^{n-1}_{kG}(-, M)/\Rad^{n}_{kG}(-, M)\to 0\cr
\end{aligned}
$$
where we take $\Rad^{-1}=\Rad^0$. We also know that the composition factors of
$$
\Rad^{n-2}_{kG}(-,\tau M)/\Rad^{n-1}_{kG}(-,\tau M)
$$
are the composition factors of
$$
\Rad^{n-2}_{kG}(-, M)/\Rad^{n-1}_{kG}(-, M)
$$
with $\tau$ applied and that each indecomposable representable 
functor has a simple top. This provides a system of equations 
which allows us to compute the composition factors by recurrence: 
in a Grothendieck group,
$$
\begin{aligned}
\Rad^0_{kG}(-,M)/\Rad^1_{kG}(-,M)=& s^M\cr
\Rad^1_{kG}(-,M)/\Rad^2_{kG}(-,M)=&
\Rad^0_{kG}(-,L_1)/\Rad^1_{kG}(-,L_1)\cr
&+
\Rad^0_{kG}(-,L_2)/\Rad^1_{kG}(-,L_2)\cr
=&s^{L_1}+s^{L_2}\cr
\Rad^2_{kG}(-,M)/\Rad^3_{kG}(-,M)=&
\Rad^1_{kG}(-,L_1)/\Rad^2_{kG}(-,L_1)\cr
&+
\Rad^1_{kG}(-,L_2)/\Rad^2_{kG}(-,L_2)\cr
&-\Rad^0_{kG}(-,\tau M)/\Rad^1_{kG}(-,\tau M)\cr
=&s^{L_{11}}+s^{\tau M}+s^{L_{22}}+s^{\tau M} - s^{\tau M}\cr
=&s^{L_{11}}+s^{\tau M}+s^{L_{22}}\cr
\end{aligned}
$$
where $0\to\tau L_i\to L_{ii}\oplus\tau M\to L_i\to 0$, $i=1,2$ 
are almost split sequences; and so on. We conclude that each irreducible 
morphism, such as $L_1\to M$, induces a monomorphism
$$
\Hom_{kG}(-,L_1)/\Rad_{kG}^\infty(-,L_1)\to 
\Hom_{kG}(-,M)/\Rad_{kG}^\infty(-,M).
$$
Hence, so does every composite of irreducible morphisms induce 
such a monomorphism. By counting composition factors we see that
$$
\begin{aligned}
0&\to \Hom_{kG}(-,\Omega^{2n}(k))/\Rad_{kG}^\infty(-,\Omega^{2n}(k))\cr
&\to
\Hom_{kG}(-,U_{2n,0}\oplus U_{0,2n})/\Rad_{kG}^\infty(-,U_{2n,0}
\oplus U_{0,2n})\nonumber\cr
&\to \Hom_{kG}(-,k)/\Rad_{kG}^\infty(-,k)\nonumber\cr
\end{aligned}
$$
is exact (and the last cokernel has composition factors inside 
the diamond we are considering in the quiver).

We may now deduce that the morphism $U_{2n,0}\oplus U_{0,2n}\to k$
is surjective, because it induces a non-zero map of representable
functors and hence must be non-zero, to a module of dimension 1.
Let $K$ be the kernel of this morphism. 
Thus $0\to K\to U_{2n,0}\oplus U_{0,2n}\to k\to 0$ is 
exact and our task is to show that $K$ is $\Omega^{2n}(k)$. Then 
$$
0\to \Hom_{kG}(-,K)\to \Hom_{kG}(-,U_{2n,0}\oplus U_{0,2n})\to \Hom_{kG}(-,k)
$$
is exact (by left exactness of $\Hom$), hence so is the similar 
sequence we get after factoring out $\Rad^\infty$ from each term. 
Since the composite
$$
\Omega^{2n}(k)\to  U_{2n,0}\oplus U_{0,2n}\to k
$$
is zero we get a morphism $\Omega^{2n}(k)\to K$ (by the 
universal property of the kernel). This passes to a map 
$$
\Hom_{kG}(-,\Omega^{2n}(k))/\Rad_{kG}^\infty(-,\Omega^{2n}(k))
\to\Hom_{kG}(-,K)/\Rad_{kG}^\infty(-,K)
$$
which is an isomorphism since both terms act as the kernel in the 
sequences of $\Rad^\infty$ quotients. It follows from this that the 
irreducible morphisms to $\Omega^{2n}(k)$ and to (the summands of) $K$ 
are the same, so that the summands of $\Omega^{2n}(k)$ and of $K$ 
occupy the same positions in the Auslander-Reiten quiver. Thus $K$ 
is indecomposable, and the map  $\Omega^{2n}(k)\to K$ is an isomorphism. 
We deduce that the sequence
$$
0\to \Omega^{2n}(k)\to  U_{2n,0}\oplus U_{0,2n}\to k\to 0
$$
is exact.
\end{proof}

We notice that the sequence $\calE_n$ represents an element 
\[
\mu_n \in \Ext_{kG}(k, \Omega^{2n}(k)) \cong 
\uHom_{kG}(k, \Omega^{2n-1}(k)) \cong\hatH^{1-2n}(G,k).
\]
It is not necessary for our development, but perhaps 
interesting to note that,
considered as an element in Tate cohomology $\hatH^{1-2n}(G,k)$,
$\mu_n$  is perpendicular (under Tate duality) to the 
subspace of $\hatH^{2n-2}(G,k)$ spanned by the 
transfers from the proper $2$-subgroups of the Sylow subgroup 
of $G$. 

The important thing is that multiplication by 
$\mu_n$ induces a natural transformation from the identity functor
to $\Omega^{2n-1}$ in the stable category $\stmod(kG)$. 
That is, we first chose a cocycle $\mu_n: k \to \Omega^{2n-1}(k)$
representing $\mu_n$. The class of the cocycle as a map in the
stable category is unique. Then for any $M$ we have
a composition map $\mu_{n,M}$ given by 
\[
\xymatrix{
M \ar[r]^{\cong} & k \otimes M \quad \ar[r]^{\mu_n \otimes 1 \quad} & 
\quad \Omega^{2n-1}(k) \otimes M
\ar[r] & \Omega^{2n-1}(M) 
}
\]
where the first map sends $m$ to $1 \otimes m$, and the last is
the isomorphism in the stable category.
This is well defined in the stable category and does not depend on the 
choice of a cocycle representing $\mu_n$ or the choice of a splitting 
$\Omega^{2n-1}(k) \otimes M \cong \Omega^{2n-1}(M) \oplus P$ for some
projective module $P$. Thus, we see that $\mu_{n, -}$ is an
element of the graded center of the stable category of $kG$-modules. 

Next we note the following relevant fact. 

\begin{proposition} \label{prop:vanish}
Suppose that $\phi:M \to N$ is a homomorphism of indecomposable 
$kG$-modules such
that $M$ and $N$ do not lie in the same component 
of the Auslander-Reiten quiver.
Then $\mu_{n,N}\phi = 0$, and $\Omega^{2n}(\phi)\mu_{n,M} = 0$ 
in the stable category $\stmod(kG)$.
\end{proposition}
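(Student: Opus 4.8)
The plan is to exploit the description of $\mu_{n,-}$ as a natural transformation in the graded center together with the fact that $\mu_{n,k}$ is, by construction, a composite of irreducible morphisms inside the Auslander-Reiten component of the trivial module. First I would reduce to the case $M=k$, or rather observe that naturality does the work directly: since $\mu_{n,-}$ is a natural transformation $\Id\to\Omega^{2n-1}$, for $\phi:M\to N$ we have $\mu_{n,N}\circ\phi=\Omega^{2n-1}(\phi)\circ\mu_{n,M}$, so the two asserted vanishing statements are equivalent once we recall that $\Omega$ is an equivalence on $\stmod(kG)$ and commutes (up to the sign in the graded center) with the transformation. So it suffices to prove $\mu_{n,N}\circ\phi=0$.

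Next I would use Lemma~\ref{lemma:exact}: the class $\mu_n\in\uHom_{kG}(k,\Omega^{2n-1}(k))$, as a map in the stable category, is represented by the connecting map of the exact sequence $\calE_n$, and this map factors (up to sign and up to projectives) as a composite of irreducible morphisms running along a side of the diamond in the Auslander-Reiten quiver of the trivial module. The key consequence is that $\mu_{n,M}$ lies in $\Rad^{2n}_{kG}$ — more to the point, that $\mu_{n,M}:M\to\Omega^{2n-1}(M)$ factors as a finite composite of irreducible morphisms entirely within the quiver component of $M$. Indeed, the construction of $\mu_{n,M}$ tensors $\mu_n$ with $M$, and since $U_{i,j}\otimes M$ lives in the same component as $M$ (the almost split sequence ending in $M$ is obtained from the one ending in $k$ by tensoring with $M$, by the result of \cite{AC} quoted above), each irreducible morphism in the factorization of $\mu_n$ tensors to an irreducible morphism in the component of $M$.

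The heart of the argument is then the following: $\mu_{n,N}\circ\phi$ is a map $M\to\Omega^{2n-1}(N)$ which factors through $N$ via $\phi$, and through the component of $N$ via the (composite-of-irreducibles) factorization of $\mu_{n,N}$. But if $M$ and $N$ lie in different components, then any morphism $M\to N$ followed by a morphism $N\to \Omega^{2n-1}(N)$ that is a composite of irreducibles in $N$'s component — here I would argue that $\phi\in\Rad^\infty_{kG}(M,N)$ because $M,N$ are in different components, hence $\phi$ is killed after composing with anything landing in a quotient where only the component of $N$ survives. More precisely: the factorization $\mu_{n,N}=\iota\circ(\text{composite of irreducibles})$ means that, modulo projectives, $\mu_{n,N}$ is in the image of $\Rad^{2n}_{kG}(U,\Omega^{2n-1}(N))$ for some $U$ in $N$'s component, with the first leg an irreducible morphism $U\to N$; naturality/associativity then shows $\mu_{n,N}\phi$ factors through a morphism $M\to U$ with $U$ in $N$'s component. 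Iterating, one sees $\mu_{n,N}\phi$ is a composite passing through the component of $N$ using an honest irreducible map into something in that component, while $\phi$ starts in the component of $M$; since there are no irreducible morphisms (indeed no nonzero morphisms modulo $\Rad^\infty$) between distinct components, and in a Krull-Schmidt triangulated / stable category a map between indecomposables in different components lies in $\Rad^\infty$, composing $\phi$ with the composite-of-$2n$-irreducibles that represents $\mu_{n,N}$ yields a map in $\Rad^\infty_{kG}(M,\Omega^{2n-1}(N))$ that is simultaneously a composite of only finitely many irreducibles — which forces it to be zero, since a nonzero finite composite of irreducibles cannot lie in $\Rad^\infty$.

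The main obstacle I anticipate is making the last paragraph fully rigorous: one must be careful that "$\mu_{n,M}$ is a composite of irreducible morphisms" holds on the nose in the stable category (not merely modulo $\Rad^\infty$ or modulo projectives), and that tensoring Lemma~\ref{lemma:exact}'s factorization with $M$ really produces irreducible morphisms in $M$'s component rather than merely morphisms in the right radical layer. The cleanest route is probably to phrase everything in $\Fun^\op\stmod(kG)$: the map $\Hom_{kG}(-,\mu_{n,N})$ has image supported inside the component of $N$, while $\Hom_{kG}(-,\phi)$ — because $M$ and $N$ are in different components — has image contained in $\Rad^\infty_{kG}(-,N)$, and one checks directly that $\Hom_{kG}(-,\mu_{n,N})$ annihilates $\Rad^\infty_{kG}(-,N)$ because $\mu_{n,N}$ is a finite composite of irreducible morphisms and hence $\Rad^\infty_{kG}(-,N)$ maps into $\Rad^\infty$ of the target, on which the (finite-composite) map $\mu_{n,N}$ acts as zero. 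Evaluating at $M$ gives $\mu_{n,N}\phi=0$.
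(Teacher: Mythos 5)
Your opening reduction is fine: since $\mu_{n,-}$ has already been shown to be an element of the graded center, naturality gives $\mu_{n,N}\phi = \Omega^{2n-1}(\phi)\mu_{n,M}$, so the two vanishing statements are equivalent and it suffices to prove one (the paper instead appeals to a ``dual argument,'' but your observation is cleaner).

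However, the core of your argument rests on a claim that is false. You assert that $\mu_{n,M}$ (equivalently $\mu_n\colon k\to\Omega^{2n-1}(k)$) ``factors as a finite composite of irreducible morphisms entirely within the quiver component.'' It does not, and it cannot: the modules in the component of $k$ are the $U_{i,j}$ with $i,j$ \emph{even}, whereas $\Omega^{2n-1}(k)=U_{2n-1,2n-1}$ has odd indices and lies in a different component of the Auslander--Reiten quiver. There is therefore no path of irreducible morphisms from $k$ to $\Omega^{2n-1}(k)$. What Lemma~\ref{lemma:exact} gives is that the two \emph{inner} maps $\alpha,\beta,\gamma,\delta$ of $\calE_n$ are composites of irreducible morphisms; $\mu_n$ itself is the \emph{connecting} homomorphism of that sequence, not one of these maps. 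Already for $n=1$, $\mu_1$ is the almost vanishing morphism $k\to\Omega(k)$, and by Proposition~\ref{almost-vanishing-characterization} it factors through \emph{every} nonzero morphism with codomain $\Omega(k)$ -- which is essentially the opposite of being a composite of irreducible morphisms. Your ``cleanest route'' paragraph inherits the same gap: the assertion that $\Hom_{kG}(-,\mu_{n,N})$ kills $\Rad^\infty_{kG}(-,N)$ \emph{because} $\mu_{n,N}$ is a finite composite of irreducibles is unjustified for the same reason. (That $\Hom_{kG}(-,\mu_{n,N})$ does kill $\Rad^\infty$ is true, but it is proved in the paper by a separate functorial length computation, not by any factorization of $\mu_{n,N}$.)

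The paper's argument takes a genuinely different route and you should compare against it. It uses the adjunction $\uHom_{kG}(M,\Omega^{2n-1}(N))\cong\uHom_{kG}(M\otimes N^*,\Omega^{2n-1}(k))$ to replace $\mu_{n,N}\phi$ by $\mu_n\theta$, where $\theta\colon X=M\otimes N^*\to k$. Vanishing of $\mu_n\theta$ is equivalent to $\theta$ factoring through the middle term $U_{2n,0}\oplus U_{0,2n}\to k$ of $\calE_n$. The decisive input -- which your proposal does not engage with at all -- is that \emph{no} indecomposable summand of $X$ lies in the component of $k$, proved via the odd-dimension and endotriviality arguments. This lets one repeatedly lift $\theta$ through the almost split sequences ending at $k$, $U_{2,0}$, $U_{0,2}$, etc.\ (the lifting is always available because the source of $\theta$ is never one of these modules), until after at least $n$ steps $\theta$ is exhibited as a sum of maps factoring through $U_{2n,0}$ or $U_{0,2n}$. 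This ``push $\theta$ away from $k$ through the meshes'' step is the genuine content, and your argument replaces it with a factorization of $\mu_n$ that does not exist.
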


\begin{proof}
There is an isomorphism $\uHom_{kG}(M,N) \cong
  \uHom_{kG}(M \otimes N^*, k)$ that is natural in both $M$ and $N$. Hence,
letting $X= M \otimes N^*$, and $\theta: X \to K$ be the homomorphism 
corresponding to $\phi$, it is only necessary 
to show that $\mu_n\theta = 0$ in 
$\uHom_{kG}(X, \Omega^{2n-1}(k)) \cong \Ext_{kG}(X, \Omega^{2n}(k))$. 
That is, we need to prove that the map $\theta$ factors through the 
middle term of the sequence in the diagram:
\[
\xymatrix{
&&&& X \ar[d]^\theta \ar[dl]^\sigma \\
\calE_n: \quad 
& 0 \ar[r] & \Omega^2n(k) \ar[r]
& U_{2n,0} \oplus U_{0,2n} \ar[r]
& k \ar[r] & 0.
}
\]
In other words, it must be shown that there exist two maps 
$\sigma_1: X \to U_{2n,0}$ and $\sigma_2: X \to U_{0,2n}$
such that 
\[
\theta = \sigma_1\gamma_{(2,0)}^\prime \dots  \gamma_{(2n-2,0)}^\prime  
\gamma_{(2n,0)}^\prime + \sigma_2 \gamma_{(0,2)} \dots 
\gamma_{(0,2n-2)}  \gamma_{(0,2n)}
\]

Observe that by our hypotheses, no indecomposable direct summand
$Y$ of $X$ is in 
the Auslander-Reiten component of the trivial 
module $k$. If it were otherwise, then $Y$ would have odd 
dimension, implying that $M$ and $N$ would also have odd dimension
\cite{BC}. 
Moreover, $Y$ would be an endotrivial module, requiring that
$Y \otimes N$ have only a single non-projective summand.
However, $k$ is a direct summand of $N^* \otimes N$, and hence
$M$ must be the unique non-projective direct summand of $Y \otimes N$.
Recall from \cite{AC} that the Auslander-Reiten 
component of $N$ consist of the
non-projective summands of $Y \otimes N$ for $Y$ in
the Auslander-Reiten
component of $k$. 
Thus we would have that $M$ is in the same
Auslander-Reiten component as $N$,
contradicting our hypotheses. 

Because the row in the diagram
\[
\xymatrix{
&&& X \ar[d]^\theta \ar[dl] \\
0 \ar[r] & \Omega^2(k) \ar[r]
& U_{2,0} \oplus U_{0,2} \ar[r]
& k \ar[r] & 0.
}
\]
is an almost split sequence, there are maps $\mu_1:X \to U_{2,0}$ and
$\mu_2: X \to U_{0,2}$ such that $\theta = \gamma_{(2,0)}\mu_1
+ \gamma_{(0,2)}\mu_2$. We can iterate this process. That is, in 
the next iteration, we write $\mu_1 = \gamma_{(4,0)}\nu_1 +
\gamma_{(2,2)}\nu_2$ for $\nu_1: X \to U_{4,0}$ 
and $\nu_2: X \to U_{2,2}$ using the fact that 
$0 \to U_{4,2} \to U_{4,0} \oplus U_{2,2} \to U_{2,0} \to 0$
is an almost split sequence. 

In this way, for some $m > n$,  we write $\theta$ as a sum of maps
of the form $\zeta\sigma$ where $\sigma: X \to U_{2m-2j, 2j}$ and 
$\zeta: U_{2m-2j, 2j} \to k$ is a composition of irreducible maps 
and $0 \leq j \leq m$. Next we note that 
$\gamma_{(2i,2j-2)}^\prime\gamma_{(2i, 2j)} = 
\gamma_{(2i-2,2j)}\gamma_{(2i, 2j)}^\prime$. Thus, since $m >n$, the 
map $\zeta$ factors either through $U_{2n, 0}$
or through  $U_{0,2n}$. It follows that $\theta$ factors through
$U_{2n,0} \oplus U_{0,2n} \to k$ as asserted. This proves half of 
the proposition. The proof of the other half is dual to this one. 
\end{proof}

Armed with this proposition, we can prove the main theorem for this 
example. 

\begin{theorem} \label{thm:dihedral}
Suppose that $G$ is a finite group with a 
dihedral Sylow 2-subgroup of order at least 8, 
and that $k$ is a field of characteristic 2. 
Suppose that $\calD$ is a component of the 
Auslander-Reiten quiver of $kG$ that contains a 
module of odd dimension. Then for any $n > 0$, there exists a 
natural transformation $\psi : {\rm Id} \to \Omega^{2n-1}$ in the 
stable category $\stmod(kG)$ with the property that $\psi$
is supported only on the set of modules in $\calD$.
\end{theorem}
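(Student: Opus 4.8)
The plan is to \emph{localise} the natural transformation $\mu_{n,-}$ constructed above to the component $\calD$. For an object $M$ of $\stmod(kG)$, write $M\cong\bigoplus_i M_i$ with each $M_i$ indecomposable and non-projective (Krull--Schmidt), and define $\psi_M\colon M\to\Omega^{2n-1}(M)$ to be the block-diagonal morphism that equals $\mu_{n,M_i}$ on each summand $M_i$ lying in $\calD$ and equals $0$ on each summand not lying in $\calD$ (so $\psi$ kills every projective summand). Since the stable Auslander--Reiten quiver is well defined on isomorphism classes in $\stmod(kG)$ and the $M_i$ are determined up to isomorphism, $\psi_M$ is well defined and additive in $M$, and the support of $\psi$ is by construction contained in the vertex set of $\calD$.

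First I would check that $\psi$ is a natural transformation. By additivity it suffices to verify, for indecomposable non-projective $M,N$ and $f\colon M\to N$, that $\Omega^{2n-1}(f)\,\psi_M=\psi_N\,f$; for a general $f$ between decomposable modules one then compares the two sides entry by entry, an off-diagonal entry being a morphism between indecomposables in distinct components. If $M$ and $N$ both lie in $\calD$ the desired identity is exactly the naturality of $\mu_{n,-}$, already established. If $M\in\calD$ and $N\notin\calD$ then $\psi_N=0$, while $\Omega^{2n-1}(f)\,\psi_M=\Omega^{2n-1}(f)\,\mu_{n,M}=0$ by Proposition~\ref{prop:vanish}, applied to $f$ between indecomposables in different components; symmetrically, if $M\notin\calD$ and $N\in\calD$ then $\psi_M=0$ and $\psi_N\,f=\mu_{n,N}\,f=0$, again by Proposition~\ref{prop:vanish}; and if neither lies in $\calD$ both sides vanish. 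This is precisely where Proposition~\ref{prop:vanish} does its work: it guarantees that truncating $\mu_{n,-}$ to the vertices of $\calD$ does not destroy naturality, and I expect this to be the only point where anything beyond bookkeeping is required.

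It remains to see that $\psi$ is nonzero. By hypothesis $\calD$ contains an indecomposable module $W$ of odd dimension; then $\psi_W=\mu_{n,W}$, and I claim this is nonzero in $\stmod(kG)$. First, $\mu_n$ itself is nonzero in $\stmod(kG)$, because the extension $\calE_n$ representing it is non-split: $\Omega^{2n}(k)$ is indecomposable and isomorphic to neither $U_{2n,0}$ nor $U_{0,2n}$. Next, since $\dim_k W$ is invertible in $k$, the evaluation $W\otimes W^*\to k$ is split surjective, so $k$ is a direct summand of $W\otimes W^*$; let $\iota\colon k\to W\otimes W^*$ and $\pi\colon W\otimes W^*\to k$ be a compatible split inclusion and projection, so $\pi\iota=1_k$. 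Naturality of $\mu_{n,-}$ applied to $\iota$ and $\pi$ yields $\mu_n=\mu_{n,k}=\Omega^{2n-1}(\pi)\circ\mu_{n,W\otimes W^*}\circ\iota$, whence $\mu_{n,W\otimes W^*}\neq 0$; since $\mu_{n,W\otimes W^*}$ agrees with $\mu_{n,W}\otimes 1_{W^*}$ up to the canonical isomorphisms, we get $\mu_{n,W}=\psi_W\neq 0$. (As all modules in $\calD$ are odd-dimensional, the same argument shows the support of $\psi$ is exactly the vertex set of $\calD$.) Finally, $\calD$ is stable under $\Omega$ --- for the component of the trivial module this is visible in the displayed quiver, and in general it follows from the description of the component of $N$ as the set of non-projective summands of the modules $Y\otimes N$ with $Y$ in the component of $k$, a description preserved by $\Omega$ --- so $\psi$ inherits from $\mu_{n,-}$ the commutation with $\Sigma$ and is in fact an element of the graded center of $\stmod(kG)$.
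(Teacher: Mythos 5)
Your proof is correct and follows essentially the same route as the paper: define $\psi$ by restricting $\mu_{n,-}$ to the modules of $\calD$ and invoke Proposition~\ref{prop:vanish} to handle naturality for morphisms crossing between distinct Auslander--Reiten components. The only difference is that you additionally verify that $\psi_W\neq 0$ for the odd-dimensional modules $W$ in $\calD$ (via the retraction through $W\otimes W^*$) and note the compatibility with $\Sigma$, points the paper's proof leaves implicit.
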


\begin{proof}
  Let $M$ be a module in $\calD$ having odd dimension.
  Recall that 
the collection of indecomposable modules in $\calD$ coincides with 
the collection of non-projective direct summands of modules of the 
form $M \otimes U_{2i, 2j}$ for $i$ and $j$ in $\ZZ$ \cite{AC}.
Hence, every indecomposable module in $\calD$ has odd dimension. 

Now define the natural transformation $\psi$ by the following rule.
For $M$ an indecomposable $kG$-module, let
\[
\psi_M \quad = \quad \begin{cases} \mu_{n,M} & \quad \text{if $M$ is in 
$\calD$,} \\ 0 & \quad \text{otherwise.} \end{cases}
\]
To prove the theorem we must show that, given a homomorphism 
$\varphi: M \to N$, for $M$ and $N$ indecomposable modules, the diagram
\[
\xymatrix{
M \ar[rr]^{\varphi} \ar[d]^{\psi_M} && N \ar[d]^{\psi_N} \\
\quad \Omega^{2n-1}(M) \qquad \ar[rr]^{\Omega^{2n-1}(\varphi)} && 
\qquad \Omega^{2n-1}(N) \quad
}
\]
commutes. This is clear from the definitions if either both $M$ and $N$ 
are in $\calD$ or both are not in $\calD$. 
If one of $M$ and $N$ is in $\calD$ and the other is not,
then we need only appeal to Proposition \ref{prop:vanish}. 
\end{proof}

We now show that the natural transformation just constructed  
satisfies the conditions of Theorem~\ref{finite-comp-length-theorem}, 
thereby showing that the circumstances of this theorem can 
actually arise in a non-trivial way.

\begin{proposition}
The natural transformation $\psi : \Id \to \Omega^{2n-1}$ just 
constructed satisfies the conditions of 
Theorem~\ref{finite-comp-length-theorem}. Moreover, if $f:V \to M$ 
is a map of indecomposable modules such that $\psi_M f \neq 0$,
then $f$ factors as a sum of composites of finitely many irreducible
maps. 
\end{proposition}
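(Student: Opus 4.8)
The plan is to verify conditions (1)--(4) of Theorem~\ref{finite-comp-length-theorem}, taking $N$ to be an indecomposable module of $\calD$ and noting that $\psi\colon\Id\to\Omega^{2n-1}=\Sigma^{1-2n}$, so $r=1-2n$ in the notation there. Conditions (1)--(3) are quick. Since $kG$ is symmetric, $\stmod(kG)$ is $(-1)$-Calabi--Yau: the Serre functor is $\Serre=\Omega=\Sigma^{-1}$, so $\Sigma^{d}$ is a Serre functor with $d=-1$. Then $\Sigma^{r-d}=\Sigma^{2-2n}=\Omega^{2n-2}$, and since $\tau=\Sigma^{-1}\Serre=\Omega^{2}$ we have $\Sigma^{r-d}U=\Omega^{2n-2}U=\tau^{n-1}U$, which lies in the $\tau$-orbit of $U$ for every $U$; this gives (2). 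Every almost split sequence ending in an odd-dimensional module has the form recalled just before Lemma~\ref{lemma:exact}, with two indecomposable middle terms $U_{2,0}\otimes M$ and $U_{0,2}\otimes M$, so every mesh in $\calD$ has exactly two middle terms; this gives (3).

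The substance is condition (4). I would first treat $U=k$, assuming $k\in\calD$ (otherwise replace $k$ by the fixed odd-dimensional generator $M_{0}$ of $\calD$ and $\calE_{n}$ by $\calE_{n}\otimes M_{0}$ everywhere). Because the support of $\psi$ is contained in $\calD$, Proposition~\ref{prop:vanish} forces every composition factor $s^{V}$ of the image of $\Hom_{kG}(-,\mu_{n})$ to have $V$ in the component $\calD_{0}$ of $k$. For $V=U_{2e,2f}$ in $\calD_{0}$, a morphism $\sigma\colon U_{2e,2f}\to k$ may be expanded, by iterating almost split sequences exactly as in the proof of Proposition~\ref{prop:vanish}, into a finite sum of terms $\zeta\rho$ with $\zeta$ a composite of irreducible morphisms whose domain is some $U_{2i,2(m-i)}$; once $m\ge 2n$ each such $\zeta$ factors through $U_{2n,0}$ or $U_{0,2n}$, hence through the middle map of $\calE_{n}$, so its contribution to $\mu_{n}\sigma$ vanishes. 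The only obstruction to pushing the iteration out to $m\ge 2n$ is meeting a split epimorphism $U_{2e,2f}\to U_{2e,2f}$ (which needs $e,f\ge 0$ and happens at step $e+f$), and even then the stranded composite $U_{2e,2f}\to k$ factors through $U_{2n,0}$ or $U_{0,2n}$ unless $e,f\le n-1$. Hence $\mu_{n}\sigma\ne 0$ is possible only when $U_{2e,2f}$ lies in the finite ``interior diamond'' $\{U_{2e,2f}\mid 0\le e,f\le n-1\}$ between $k$ and $\Omega^{2n}(k)$, so the image of $\Hom_{kG}(-,\mu_{n})$ has finite support and therefore finite composition length.

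For an arbitrary $U\in\calD$ I would bootstrap via the natural isomorphism $\uHom_{kG}(V,U)\cong\uHom_{kG}(V\otimes U^{*},k)$, under which $\Hom_\calC(-,\psi_{U})$ becomes $\Hom_{kG}(-\otimes U^{*},\mu_{n})$; so $s^{V}$ is a composition factor of the image of $\Hom_\calC(-,\psi_{U})$ exactly when some indecomposable summand of $V\otimes U^{*}$ lies in the finite support of $\Hom_{kG}(-,\mu_{n})$. Writing $U\cong M_{0}\otimes U_{2c,2d}$ and $V\cong M_{0}\otimes U_{2a,2b}$ modulo projectives, using $U_{i,j}\otimes U_{s,t}\cong U_{i+s,j+t}$ modulo projectives, and using that $k$ is a summand of $M_{0}\otimes M_{0}^{*}$ (valid since an odd-dimensional module has dimension prime to $2$), the $\calD_{0}$-summands of $V\otimes U^{*}$ are the $U_{2(g_{l}+a-c),\,2(h_{l}+b-d)}$ arising from the finitely many $\calD_{0}$-summands $U_{2g_{l},2h_{l}}$ of $M_{0}\otimes M_{0}^{*}$, while summands in other components never lie in the support of $\Hom_{kG}(-,\mu_{n})$; requiring one of the former to lie in the interior diamond pins $(a,b)$ to a finite set. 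Thus the image of $\Hom_\calC(-,\psi_{U})$ has finite support for every $U\in\calD$. For the last clause of (4), take $U=k$ (or $M_{0}$): the image of $\Hom_{kG}(-,\mu_{n})$ contains its top $s^{k}$ and the socle $s^{\Omega^{2n-2}(k)}$ of the injective functor $\Hom_{kG}(-,\Omega^{2n-1}k)$, which are distinct once $n\ge 2$ (for $n=1$, $\psi=\mu_{1,-}$ is a family of Auslander--Reiten connecting maps and the clause genuinely fails, which is why $n\ge 2$ is the relevant range); equivalently, $\mu_{n,k}\ne 0$ because $\calE_{n}$ is a non-split extension, but $\mu_{n,k}$ is not almost vanishing, since that would force $\Omega^{2n-2}(k)\cong k$, contradicting tree class $A_{\infty}^{\infty}$. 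Finally the ``moreover'' statement follows: if $\psi_{M}f\ne 0$ then $f\notin\Rad^{\ell}_{kG}(-,M)$ for the $\ell$ bounding the composition length of the image of $\Hom_\calC(-,\psi_{M})$, and since $\Rad^{\infty}_{kG}$ vanishes on the component $\calD$, $f$ is a finite sum of composites of irreducible morphisms.

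The step I expect to be the main obstacle is the finiteness in (4): running the ``iteration of almost split sequences'' argument of Proposition~\ref{prop:vanish} when the source lies in $\calD_{0}$ (so split epimorphisms can interrupt it), confining the outcome to a finite region of the quiver, and then carrying that bound over to a general $U\in\calD$ through the $\otimes U^{*}$ adjunction and the odd-dimensionality. The one external ingredient I would flag is the vanishing of $\Rad^{\infty}_{kG}$ on a $\ZZ A_{\infty}^{\infty}$ component, used to upgrade ``$f\notin\Rad^{\ell}$'' to ``$f$ is a finite sum of composites of irreducibles''.
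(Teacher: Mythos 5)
Your verification of conditions (1)--(3) of Theorem~\ref{finite-comp-length-theorem} is correct and matches the paper in substance (the paper merely cites \cite{BS} and \cite{Web}; your explicit computation $\Sigma^{r-d}=\Sigma^{2-2n}=\Omega^{2n-2}=\tau^{n-1}$ is the same fact made concrete). For the substantive part of condition (4) you take a genuinely different route. The paper obtains finiteness of the support of $\Hom_\calC(-,\psi_M)$ directly from the functorial argument of Lemma~\ref{lemma:exact} applied to the tensored sequence $0\to\Omega^{2n}(M)\to(U_{2n,0}\oplus U_{0,2n})\otimes M\to M\to 0$: the cokernel of the induced map of representable functors is exactly the image of $\Hom_\calC(-,\psi_M)$ and has composition factors confined to the finite diamond between $M$ and $\Omega^{2n}(M)$. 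You instead re-run the pullback iteration from the proof of Proposition~\ref{prop:vanish} with the source now allowed to lie in the component of $k$, track where split epimorphisms interrupt it, and then transfer to a general $U\in\calD$ through the $-\otimes U^*$ adjunction. Both work; the paper's route is shorter and sidesteps the bookkeeping with summands of $M_0\otimes U_{2a,2b}$ (which need not be indecomposable --- a point your sketch elides), while yours makes the finite support set completely explicit. Two further remarks. First, you explicitly verify the ``size at least 2'' clause of condition (4) via the top $s^k$ and socle $s^{\Omega^{2n-2}(k)}$ of the image, which the paper leaves implicit, and you correctly observe that this clause fails for $n=1$ (where $\mu_{1,M}$ is almost vanishing for every $M$), so the proposition as literally stated requires $n\ge 2$; this is a genuine slip in the paper that you caught. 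Second, for the ``moreover'' clause the paper argues via Corollary~\ref{finite-composition-length}(5) and Proposition~\ref{almost-vanishing-characterization}(2), factoring $f$ into an almost vanishing composite; your argument via $f\notin\Rad^\ell$ together with the radical-layer structure from Lemma~\ref{lemma:exact} is an acceptable variant, and both versions share the same mild imprecision (strictly one only concludes that $f$ agrees with a sum of composites of irreducible maps modulo the kernel of $\Hom_\calC(-,\psi_M)$).
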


\begin{proof}
We know when $\calC=\stmod(kG)$ that $\tau=\Omega^2$, 
$\Sigma=\Omega^{-1}$ and $\Serre=\Omega$. Thus $\stmod(kG)$ 
is a $(-1)$-Calabi-Yau category. The fact that for each 
indecomposable $U,$ the only objects of the form $\Sigma^t U$ 
in the component of $U$ lie in the same $\tau$-orbit as $U$, 
as well as the fact that each mesh has at most 2 middle terms, 
follow from \cite{BS} and \cite{Web}. 

We show that for all indecomposable modules $M$, the natural 
transformation $\Hom_\calC(-,\psi_M)$ has finite support. When 
$M$ is not in $\calD$ this is clear, 
so we suppose $M$ lies in $\calD$. The construction of 
$\psi_M=\mu_{n,M}:M\to\Omega^{2n-1}(M)$ is that it is the 
third homomorphism in a triangle in $\stmod(kG)$ of the form
$$
\Omega^{2n}(M)\to (U_{2n,0}\oplus U_{0,2n})\otimes M\to M\to \Omega^{2n-1}(M)
$$
corresponding to a short exact sequence of $kG$-modules
$$
0\to \Omega^{2n}(M)\to (U_{2n,0}\oplus U_{0,2n})\otimes M\to M\to 0.
$$
The argument of Lemma~\ref{lemma:exact} showed that the sequence of functors
$$
\begin{aligned}
0\to \Hom_{kG}&(-,\Omega^{2n}(M))\cr
&\to \Hom_{kG}(-,(U_{2n,0}\oplus U_{0,2n})\otimes M)\to \Hom_{kG}(-,M)\cr
\end{aligned}
$$
is exact, and the final cokernel has composition factors lying in the 
diamond of the Auslander-Reiten quiver determined by the modules 
$M, \Omega^{2n}(M), U_{n,0}\otimes M$ and $U_{0,n}\otimes M$, 
including the right-hand edge of this diamond, but not the left-hand 
edge or the modules $U_{n,0}\otimes M$ and $U_{0,n}\otimes M$. This cokernel 
is the image of $\Hom_{\stmod(kG)}(-,\psi_M)$, so that condition (1) 
of Corollary~\ref{finite-composition-length} is satisfied. 
This shows that $\Hom_\calC(-,\psi_M)$ has finite support.

The final statement follows from part (5) of 
Corollary~\ref{finite-composition-length} and condition (2) of 
Proposition~\ref{almost-vanishing-characterization}. 
If $\psi_Mf\ne 0$ then $\psi_Mf\xi$ is almost vanishing for 
some $\xi$. Thus $f\xi$ is a sum of composites of irreducible 
morphisms and hence so is $f$.
\end{proof}

\end{document}